\newtheorem{theorem}{Theorem}
\newtheorem{definition}[theorem]{Definition}
\newtheorem{example}[theorem]{Example}
\newtheorem{proposition}[theorem]{Proposition}
\newtheorem{remark}[theorem]{Remark}
\newenvironment{proof}[1][Proof]{\noindent\textbf{#1.} }{\ \rule{0.5em}{0.5em}}
\numberwithin{theorem}{section}
\numberwithin{equation}{section}
\begin{document}

\title{Local Bianchi Identities in the Relativistic Non-Autonomous Lagrange
Geometry}
\author{Mircea Neagu}
\date{}
\maketitle

\begin{abstract}
The aim of this paper is to describe the local Bianchi identities for an $h$%
-normal $\Gamma $-linear connection of Cartan type $\nabla \Gamma $ on the
first-order jet space $J^{1}(\mathbb{R},M)$. In this direction, we present
the local expressions of the adapted components of the torsion and curvature
d-tensors produced by $\nabla \Gamma $ and we give the general local
expressions of Bianchi identities which connect these d-torsions and
d-curvatures.
\end{abstract}

\textbf{Mathematics Subject Classification (2000):} 53C60, 53C43, 53C07.

\textbf{Key words and phrases:} the 1-jet space $J^{1}(\mathbb{R},M)$,
nonlinear connections, d-linear connections of Cartan type, d-torsions and
d-curvatures, Bianchi identities.

\section{Introduction}

\hspace{5mm}It is well known that the 1-jet spaces are basic objects in the
study of classical and quantum field theories. For such a reason, a lot of
authors (Asanov \cite{Asanov}, Krupkov\'{a} \cite{Krupkova}, Saunders \cite%
{Saunders}, Vondra \cite{Vondra} and many others) studied the differential
geometry of 1-jet spaces. Going on the geometrical studies initiated by
Asanov \cite{Asanov} and using as a pattern the Lagrangian geometrical ideas
developed by Miron, Anastasiei and Buc\u{a}taru in the monographs \cite%
{Mir-An} and \cite{Buc-Mir}, the author of this paper has recently developed
the \textit{Riemann-Lagrange geometry of the }$1$\textit{-jet spaces} \cite%
{Neagu Carte}. This is a general geometrical framework for the study of 
\textit{\textbf{\textit{relativistic n}on-autonomous} (\textbf{rheonomic} }or%
\textit{\ \textbf{time dependent}) \textbf{Lagrangians}} \cite{Neagu-Rheon}
or \textit{relativistic multi-time dependent Lagrangians} \cite{Neagu Carte}.

We underline that a \textit{\textbf{\textit{classical n}on-autonomous} (%
\textbf{rheonomic} }or\textit{\ \textbf{time dependent}) \textbf{Lagrangian
geometry}}, that is a geometrization of Lagrangians depending on an \textit{%
\textbf{absolute time}}, was sketched at level of ideas by Miron and
Anastasiei in the last chapter of the book \cite{Mir-An}. That classical
non-autonomous Lagrangian geometry was developed further by Anastasiei and
Kawaguchi \cite{Anastas-Kaw} or Frigioiu \cite{Frigioiu}.

In what follows, we try to expose the main geometrical and physical aspects
which differentiate the both geometrical theories: the \textit{\textbf{%
\textit{jet relativistic n}on-autonomous} \textbf{Lagrangian geometry}} \cite%
{Neagu-Rheon} and the \textit{\textbf{\textit{classical n}on-\-au\-to\-nomous%
} \textbf{Lagrangian geometry }}\cite{Mir-An}.

In this way, we point out that the \textit{\textbf{\textit{relativistic n}%
on-autonomous} \textbf{Lagrangian geometry}} \cite{Neagu-Rheon} has as
natural house the 1-jet space $J^{1}(\mathbb{R},M)$, where $\mathbb{R}$ is
the manifold of real numbers having the coordinate $t$. This coordinate
re\-pre\-sents for us the usual \textit{relativistic time}. We recall that
the 1-jet space $J^{1}(\mathbb{R},M)$ is regarded as a vector bundle over
the product manifold $\mathbb{R}\times M$, having the fibre type $\mathbb{R}%
^{n}$, where $n$ is the dimension of the \textit{spatial} manifold $M$. In
mechanical terms, if the manifold $M$ has the spatial local coordinates $%
(x^{i})_{i=\overline{1,n}}$, then the 1-jet vector bundle $J^{1}(\mathbb{R}%
,M)\rightarrow \mathbb{R}\times M$ can be regarded as a \textit{bundle of
configurations} having the local coordinates $(t,x^{i},y_{1}^{i})$; these
transform by the rules \cite{Neagu-Iasi}%
\begin{equation}
\left\{ 
\begin{array}{l}
\widetilde{t}=\widetilde{t}(t)\medskip \\ 
\widetilde{x}^{i}=\widetilde{x}^{i}(x^{j})\medskip \\ 
\widetilde{y}_{1}^{i}=\dfrac{\partial \widetilde{x}^{i}}{\partial x^{j}}%
\dfrac{dt}{d\widetilde{t}}\cdot y_{1}^{j}.%
\end{array}%
\right.  \label{rgg}
\end{equation}

\begin{remark}
The form of the jet transformation group (\ref{rgg}) stands out by the 
\textbf{relativistic }\textit{character} of the \textbf{time} $t$.
\end{remark}

Comparatively, the \textit{\textbf{\textit{classical n}on-autonomous} 
\textbf{Lagrangian geometry }}from \cite{Mir-An} has as \textit{bundle of
configurations }the vector bundle $\mathbb{R}\times TM\rightarrow M,$ whose
local coordinates $(t,x^{i},y^{i})$ transform by the rules 
\begin{equation}
\left\{ 
\begin{array}{l}
\widetilde{t}=t\medskip \\ 
\widetilde{x}^{i}=\widetilde{x}^{i}(x^{j})\medskip \\ 
\widetilde{y}^{i}=\dfrac{\partial \widetilde{x}^{i}}{\partial x^{j}}\cdot
y^{j},%
\end{array}%
\right.  \label{agg}
\end{equation}%
where $TM$ is the tangent bundle of the spatial manifold $M$.

\begin{remark}
The form of the transformation group (\ref{agg}) stands out by the \textbf{%
absolute }\textit{character} of the \textbf{time} $t$.
\end{remark}

It is obvious that the jet transformation group (\ref{rgg}) from the \textit{%
relativistic non-autonomous Lagrangian geometry} is more general and more
natural than the transformation group (\ref{agg}) used in the \textit{%
classical non-autonomous Lagrangian geometry}. The naturalness of the
transformation group (\ref{rgg}) comes from the fact that the relativity of
time is well-known. In contrast, the transformation group (\ref{agg})
ignores the temporal reparametrizations, emphasizing in this way the
absolute character of the usual time coordinate $t$.

From a geometrical point of view, we point out that the whole \textit{%
classical non-autonomous Lagrangian geometry} initiated by Miron and
Anastasiei \cite{Mir-An} relies on the study of the \textit{absolute energy
action functional}%
\[
\mathbb{E}_{1}(c)=\int_{a}^{b}L(t,x^{i},y^{i})dt, 
\]%
where $L:\mathbb{R}\times TM\rightarrow \mathbb{R}$ is a Lagrangian function
and $y^{i}=dx^{i}/dt,$ whose Euler-Lagrange equations produce a semispray $%
G^{i}(t,x^{k},y^{k})$ and a corresponding nonlinear connection%
\[
N_{j}^{i}=\frac{\partial G^{i}}{\partial y^{j}}. 
\]%
Therefore, the authors construct the adapted bases of vector and covector
fields, together with the adapted components of the $N$-linear connections
and their corresponding d-torsions and d-curvatures. But, because $%
L(t,x^{i},y^{i})$ is a real function, we deduce that the previous
geometrical theory has the following impediment: -\textit{the energy action
functional depends on the reparametrizations }$t\longleftrightarrow 
\widetilde{t}(t)$\textit{\ of the same curve }$c.$ Thus, in order to avoid
this inconvenience, the Finsler case imposes the 1-positive homogeneity
condition \cite{Bao}%
\[
L(t,x^{i},\lambda y^{i})=\lambda L(t,x^{i},y^{i}),\text{ }\forall \text{ }%
\lambda >0. 
\]

Alternatively, the \textit{relativistic non-autonomous Lagrangian geometry}
from \cite{Neagu-Rheon} uses the \textit{relativistic energy action
functional}%
\[
\mathbb{E}_{2}(c)=\int_{a}^{b}L(t,x^{i},y_{1}^{i})\sqrt{h_{11}(t)}dt, 
\]%
where $L:J^{1}(\mathbb{R},M)\rightarrow \mathbb{R}$ is a jet Lagrangian
function and $h_{11}(t)$ is a Riemannian metric on the relativistic time
manifold $\mathbb{R}$. This functional is now independent by the
reparametrizations $t\longleftrightarrow \widetilde{t}(t)$ of the same curve 
$c$, even if $L$ is only a function. The Euler-Lagrange equations of the 
\textit{Lagrangian}%
\[
\mathcal{L}=L(t,x^{i},y_{1}^{i})\sqrt{h_{11}(t)} 
\]%
produce a relativistic time dependent semispray \cite{Neagu-Rheon}%
\[
\mathcal{S=}\left( H_{(1)1}^{(i)},\text{ }G_{(1)1}^{(i)}\right) , 
\]%
which gives the jet nonlinear connection \cite{Neagu-Iasi}%
\[
\Gamma _{\mathcal{S}}=\left( M_{(1)1}^{(j)}=2H_{(1)1}^{(j)},\text{ }%
N_{(1)k}^{(j)}=\frac{\partial G_{(1)1}^{(j)}}{\partial y_{1}^{k}}\right) . 
\]%
With these geometrical tools we can construct in the relativistic
non-autonomous Lagrangian geometry the distinguished (d-) linear
connections, together with their d-torsions and d-curvatures, which
naturally generalize the similar geometrical objects from the classical
non-autonomous Lagrangian geometry \cite{Mir-An}.

In this respect, the author of this paper believes that the jet relativistic
geometrical approach proposed in the papers \cite{Neagu-Iasi}, \cite%
{Neagu-Rheon}, \cite{Neagu-Stoica} has more geometrical and physical
meanings than the theory proposed by Miron and Anastasiei in \cite{Mir-An}.
For such a reason, the aim of this paper is to describe the Bianchi
identities that govern the jet relativistic non-autonomous Lagrangian
geometry \cite{Neagu-Rheon}. These Bianchi identities are necessary for the
construction of the \textit{generalized Maxwell equations} that characterize
the electromagnetic theory in the jet relativistic non-autonomous Lagrangian
geometrical background.

In conclusion, in order to remark the main similitudes and differences
between these geometrical theories, we invite the reader to compare the 
\textit{classical }and \textit{relativistic non-autonomous Lagrangian
geometries} exposed in the works \cite{Mir-An} and \cite{Neagu-Rheon}.

As a final remark, we point out that for a lot of mathematicians (such as
Crampin \cite{Crampin}, Krupkov\'{a} \cite{Krupkova}, de L\'{e}on \cite{de
Leon}, Sarlet \cite{Sarlet} and others) the non-autonomous Lagrangian
geometry is constructed on the first jet bundle $J^{1}\pi $ of a fibered
manifold $\pi :M^{n+1}\longrightarrow \mathbb{R}.$ In their works, if $%
(t,x^{i})$ are the local coordinates on the $n+1$-dimensional manifold $M$
such that $t$ is a global coordinate for the fibers of the submersion $\pi $
and $x^{i}$ are transverse coordinates of the induced foliation, then a
change of coordinates on $M$ is given by%
\begin{equation}
\left\{ 
\begin{array}{ll}
\widetilde{t}=\widetilde{t}(t),\medskip & \dfrac{d\widetilde{t}}{dt}\neq 0
\\ 
\widetilde{x}^{i}=\widetilde{x}^{i}(x^{j},t), & \text{rank}\left( \dfrac{%
\partial \widetilde{x}^{i}}{\partial x^{j}}\right) =n.%
\end{array}%
\right.  \label{Krupkova}
\end{equation}

Altough the 1-jet extension of the transformation rules (\ref{Krupkova}) is
more general than the transformation group (\ref{rgg}), the author ot this
paper considers that the transformation group (\ref{rgg}) is nevertheless
more appropriate for his final purpose, namely: -- the development of a 
\textit{relativistic non-autonomous Lagrangian gravitational and
electromagnetic field theory} which to be characterized by some \textit{%
generalized Einstein and Maxwell equations} \cite{Neagu-Rheon}. In this
direction, we need the \textit{local Bianchi identities} that govern the jet
relativistic non-autonomous Lagrangian geometry \cite{Neagu-Iasi}, \cite%
{Neagu-Rheon}, \cite{Neagu-Stoica}.

\section{The adapted components of the jet $\Gamma $-linear connections}

\label{coli}

\hspace{5mm}Let us suppose that the 1-jet space $E=J^{1}(\mathbb{R},M)$ is
endowed with a nonlinear connection%
\begin{equation}
\Gamma =\left( M_{(1)1}^{(i)},N_{(1)j}^{(i)}\right) ,  \label{gen_nlc}
\end{equation}%
where the local coefficients $M_{(1)1}^{(i)}$ (resp., $N_{(1)j}^{(i)}$) are
called the \textit{temporal }(resp.,\textit{\ spatial})\textit{\ components
of }$\Gamma $. Note that the transformation rules of the local components of
the nonlinear connection are expressed by \cite{Neagu-Iasi}%
\begin{equation}
\begin{array}{l}
\widetilde{M}_{(1)1}^{(k)}=M_{(1)1}^{(j)}\left( \dfrac{dt}{d\widetilde{t}}%
\right) ^{2}\dfrac{\partial \widetilde{x}^{k}}{\partial x^{j}}-\dfrac{dt}{d%
\widetilde{t}}\dfrac{\partial \widetilde{y}_{1}^{k}}{\partial t},\medskip \\ 
\widetilde{N}_{(1)l}^{(k)}=N_{(1)i}^{(j)}\dfrac{dt}{d\widetilde{t}}\dfrac{%
\partial x^{i}}{\partial \widetilde{x}^{l}}\dfrac{\partial \widetilde{x}^{k}%
}{\partial x^{j}}-\dfrac{\partial x^{i}}{\partial \widetilde{x}^{l}}\dfrac{%
\partial \widetilde{y}_{1}^{k}}{\partial x^{i}}.%
\end{array}
\label{nlc_tr_rules}
\end{equation}

\begin{example}
Let us consider $h=(h_{11}(t))$ (resp., $\varphi =(\varphi _{ij}(x))$) a
Riemannian metric on the temporal manifold $\mathbb{R}$ (resp., the spatial
manifold $M$) and let 
\[
\varkappa _{11}^{1}=\frac{h^{11}}{2}\frac{dh_{11}}{dt},\qquad \gamma
_{jk}^{i}=\frac{\varphi ^{im}}{2}\left( \frac{\partial \varphi _{jm}}{%
\partial x^{k}}+\frac{\partial \varphi _{km}}{\partial x^{j}}-\frac{\partial
\varphi _{jk}}{\partial x^{m}}\right) , 
\]%
where $h^{11}=1/h_{11}$, be their Christoffel symbols. Then, the set of
local functions%
\[
\mathring{\Gamma}=\left( \mathring{M}_{(1)1}^{(j)},\mathring{N}%
_{(1)i}^{(j)}\right) , 
\]%
where%
\begin{equation}
\mathring{M}_{(1)1}^{(j)}=-\varkappa _{11}^{1}y_{1}^{j},\qquad \mathring{N}%
_{(1)i}^{(j)}=\gamma _{im}^{j}y_{1}^{m},  \label{can_nlc}
\end{equation}%
represents a nonlinear connection on the 1-jet space $J^{1}(\mathbb{R},M)$.
This jet nonlinear connection is called the \textbf{canonical nonlinear
connection attached to the pair of Riemannian metrics} $(h_{11}(t),\varphi
_{ij}(x))$.
\end{example}

In the sequel, starting with the fixed nonlinear connection $\Gamma $ given
by (\ref{gen_nlc}), we construct the \textit{horizontal} vector fields%
\begin{equation}
\frac{\delta }{\delta t}=\frac{\partial }{\partial t}-M_{(1)1}^{(j)}\frac{%
\partial }{\partial y_{1}^{j}},\qquad \frac{\delta }{\delta x^{i}}=\frac{%
\partial }{\partial x^{i}}-N_{(1)i}^{(j)}\frac{\partial }{\partial y_{1}^{j}}%
,  \label{v-a-b}
\end{equation}%
and the \textit{vertical} covector fields%
\begin{equation}
\delta y_{1}^{i}=dy_{1}^{i}+M_{(1)1}^{(i)}dt+N_{(1)j}^{(i)}dx^{j}.
\label{cv-a-b}
\end{equation}

It is easy to see now that the set of vector fields%
\begin{equation}
\left\{ \frac{\delta }{\delta t},\frac{\delta }{\delta x^{i}},\dfrac{%
\partial }{\partial y_{1}^{i}}\right\} \subset \mathcal{X}(E)
\label{ad-basis-vf}
\end{equation}%
represents a \textit{basis} in the set of vector fields on $J^{1}(\mathbb{R}%
,M)$, and the set of covector fields 
\begin{equation}
\left\{ dt,dx^{i},\delta y_{1}^{i}\right\} \subset \mathcal{X}^{\ast }(E)
\label{ad-basis-cvf}
\end{equation}%
represents its \textit{dual basis} in the set of 1-forms on $J^{1}(\mathbb{R}%
,M)$.

\begin{definition}
The dual bases (\ref{ad-basis-vf}) and (\ref{ad-basis-cvf}) are called the 
\textbf{adapted bases} attached to the nonlinear connection $\Gamma $ on the
1-jet space $E=J^{1}(\mathbb{R},M)$.
\end{definition}

\begin{remark}
It is important to note that the local transformation laws of the elements
of the adapted bases (\ref{ad-basis-vf}) and (\ref{ad-basis-cvf}) are 
\textit{classical tensorial ones}:%
\begin{equation}
\begin{array}{lll}
\dfrac{\delta }{\delta t}=\dfrac{d\widetilde{t}}{dt}\dfrac{\delta }{\delta 
\widetilde{t}}, & \dfrac{\delta }{\delta x^{i}}=\dfrac{\partial \widetilde{x}%
^{j}}{\partial x^{i}}\dfrac{\delta }{\delta \widetilde{x}^{j}}, & \dfrac{%
\partial }{\partial y_{1}^{i}}=\dfrac{\partial \widetilde{x}^{j}}{\partial
x^{i}}\dfrac{dt}{d\widetilde{t}}\dfrac{\partial }{\partial \widetilde{y}%
_{1}^{j}},\medskip \\ 
dt=\dfrac{dt}{d\widetilde{t}}d\widetilde{t}, & dx^{i}=\dfrac{\partial x^{i}}{%
\partial \widetilde{x}^{j}}d\widetilde{x}^{j}, & \delta y_{1}^{i}=\dfrac{%
\partial x^{i}}{\partial \widetilde{x}^{j}}\dfrac{d\widetilde{t}}{dt}\delta 
\widetilde{y}_{1}^{j}.\medskip%
\end{array}
\label{tr_rules_a_b}
\end{equation}%
For such a reason, is our choice to describe the geometrical objects of the
1-jet space $E=J^{1}(\mathbb{R},M)$ in local adapted components.
\end{remark}

It is obvious that the Lie algebra $\mathcal{X}(E)$ of the vector fields on $%
E=J^{1}(\mathbb{R},M)$ decomposes as $\mathcal{X}(E)=\mathcal{X}(\mathcal{H}%
_{\mathbb{R}})\oplus \mathcal{X}(\mathcal{H}_{M})\oplus \mathcal{X}(\mathcal{%
V}),$ where%
\[
\mathcal{X}(\mathcal{H}_{\mathbb{R}})=Span\left\{ {\frac{\delta }{\delta t}}%
\right\} ,\quad \mathcal{X}(\mathcal{H}_{M})=Span\left\{ {\frac{\delta }{%
\delta x^{i}}}\right\} ,\quad \mathcal{X}(\mathcal{V})=Span\left\{ {\frac{%
\partial }{\partial y_{1}^{i}}}\right\} . 
\]%
Let us denote as $h_{\mathbb{R}}$, $h_{M}$ (horizontal) and $v$ (vertical)
the canonical projections produced by the above decomposition.

\begin{definition}
A linear connection $\nabla :\mathcal{X}(E)\times \mathcal{X}(E)\rightarrow 
\mathcal{X}(E)$, which verifies the conditions $\nabla h_{\mathbb{R}}=0,$ $%
\nabla h_{M}=0$ and $\nabla v=0$, is called a $\Gamma $\textbf{-linear
connection} on the 1-jet space $E=J^{1}(\mathbb{R},M)$. Obviously, the local
description of a $\Gamma $-linear connection $\nabla $ on $E$ is given by a
set of \textbf{nine} local adapted components%
\begin{equation}
\nabla \Gamma =\left( \bar{G}_{11}^{1},\;G_{i1}^{k},\;G_{(1)(j)1}^{(i)(1)},\;%
\bar{L}_{1j}^{1},\;L_{ij}^{k},\;L_{(1)(j)k}^{(i)(1)},\;\bar{C}%
_{1(k)}^{1(1)},\;C_{i(k)}^{j(1)},\;C_{(1)(j)(k)}^{(i)(1)(1)}\right) ,
\label{comp_lin_conn}
\end{equation}%
which are produced by the relations:%
\[
\begin{array}{l}
(h_{\mathbb{R}})\hspace{4mm}{\nabla _{{\dfrac{\delta }{\delta t}}}{\dfrac{%
\delta }{\delta t}}=\bar{G}_{11}^{1}{\dfrac{\delta }{\delta t}},\;\;\nabla _{%
{\dfrac{\delta }{\delta t}}}{\dfrac{\delta }{\delta x^{i}}}=G_{i1}^{k}{%
\dfrac{\delta }{\delta x^{k}}},\;\;\nabla _{{\dfrac{\delta }{\delta t}}}{%
\dfrac{\partial }{\partial y_{1}^{i}}}=G_{(1)(i)1}^{(k)(1)}{\dfrac{\partial 
}{\partial y_{1}^{k}}}},\medskip \\ 
(h_{M})\;\;{\nabla _{{\dfrac{\delta }{\delta x^{j}}}}{\dfrac{\delta }{\delta
t}}=\bar{L}_{1j}^{1}{\dfrac{\delta }{\delta t}},\;\;\nabla _{{\dfrac{\delta 
}{\delta x^{j}}}}{\dfrac{\delta }{\delta x^{i}}}=L_{ij}^{k}{\dfrac{\delta }{%
\delta x^{k}}},\;\;\nabla _{{\dfrac{\delta }{\delta x^{j}}}}{\dfrac{\partial 
}{\partial y_{1}^{i}}}=L_{(1)(i)j}^{(k)(1)}{\dfrac{\partial }{\partial
y_{1}^{k}}}},\medskip \\ 
(v)\hspace{3mm}{\nabla _{{\dfrac{\partial }{\partial y_{1}^{j}}}}{\dfrac{%
\delta }{\delta t}}=\bar{C}_{1(j)}^{1(1)}{\dfrac{\delta }{\delta t}}%
,\;\nabla _{{\dfrac{\partial }{\partial y_{1}^{j}}}}{\dfrac{\delta }{\delta
x^{i}}}=C_{i(j)}^{k(1)}{\dfrac{\delta }{\delta x^{k}}},\;\nabla _{{\dfrac{%
\partial }{\partial y_{1}^{j}}}}{\dfrac{\partial }{\partial y_{1}^{i}}}%
=C_{(1)(i)(j)}^{(k)(1)(1)}{\dfrac{\partial }{\partial y_{1}^{k}}}}.%
\end{array}%
\]
\end{definition}

\begin{example}
Let us consider the nonlinear connection $\mathring{\Gamma}$ given by (\ref%
{can_nlc}), produced by the pair of Riemannian metrics $(h_{11}(t),\varphi
_{ij}(x))$. Then, the set of adapted local components%
\[
B\mathring{\Gamma}=\left( \bar{G}_{11}^{1},\;0,\;G_{(1)(i)1}^{(k)(1)},\;0,%
\;L_{ij}^{k},\;L_{(1)(i)j}^{(k)(1)},\;0,\;0,\;0\right) , 
\]%
where%
\[
\bar{G}_{11}^{1}=\varkappa _{11}^{1},\;\;G_{(1)(i)1}^{(k)(1)}=-\delta
_{i}^{k}\varkappa _{11}^{1},\;\;L_{ij}^{k}=\gamma
_{ij}^{k},\;\;L_{(1)(i)j}^{(k)(1)}=\gamma _{ij}^{k}, 
\]%
defines a $\mathring{\Gamma}$-linear connection on the 1-jet space $E=J^{1}(%
\mathbb{R},M)$. This connection is called the \textbf{Berwald linear
connection attached to the Riemannian metrics }$h_{11}(t)$ \textbf{and} $%
\varphi _{ij}(x)$.
\end{example}

Now, let $\nabla $ be a fixed $\Gamma $-linear connection on the 1-jet space 
$E=J^{1}(\mathbb{R},M)$, given by the adapted local components (\ref%
{comp_lin_conn}).

\begin{definition}
A geometrical object $D=\left( D_{1k(1)(l)...}^{1i(j)(1)...}\right) $ on the
1-jet vector bundle $E=J^{1}(\mathbb{R},M)$, whose local components
transform by the rules%
\[
D_{1k(1)(l)...}^{1i(j)(1)...}=\widetilde{D}_{1r(1)(s)...}^{1p(m)(1)...}\frac{%
dt}{d\widetilde{t}}\frac{\partial x^{i}}{\partial \widetilde{x}^{p}}\left( 
\frac{\partial x^{j}}{\partial \widetilde{x}^{m}}\frac{d\widetilde{t}}{dt}%
\right) \frac{d\widetilde{t}}{dt}\frac{\partial \widetilde{x}^{r}}{\partial
x^{k}}\left( \frac{\partial \widetilde{x}^{s}}{\partial x^{l}}\frac{dt}{d%
\widetilde{t}}\right) ..., 
\]%
is called a \textbf{d-tensor field}.
\end{definition}

\begin{example}
If $h_{11}(t)$ is a Riemannian metric on the time manifold $\mathbb{R}$,
then the geometrical object $\mathbf{J}=\left( \mathbf{J}_{(1)1j}^{(i)}%
\right) ,$ where $\mathbf{J}_{(1)1j}^{(i)}=h_{11}\delta _{j}^{i},$
represents a d-tensor field on the 1-jet space $E=J^{1}(\mathbb{R},M)$. This
is called the $h$\textbf{-normalization d-tensor field}.
\end{example}

The $\Gamma $-linear connection $\nabla $ naturally induces a linear
connection on the set of the d-tensors of the 1-jet vector bundle $E$, in
the following way: $-$ starting with $X\in \mathcal{X}(E)$ a vector field
and $D$ a d-tensor field on $E$, locally expressed by%
\[
\begin{array}{l}
\medskip {X=X^{1}{\dfrac{\delta }{\delta t}}+X^{r}{\dfrac{\delta }{\delta
x^{r}}}+X_{(1)}^{(r)}{\dfrac{\partial }{\partial y_{1}^{r}}},} \\ 
{D=D_{1k(1)(l)\ldots }^{1i(j)(1)\ldots }{\dfrac{\delta }{\delta t}}\otimes {%
\dfrac{\delta }{\delta x^{i}}}\otimes {\dfrac{\partial }{\partial y_{1}^{j}}}%
\otimes dt\otimes dx^{k}\otimes \delta y_{1}^{l}\ldots ,}%
\end{array}%
\]%
we introduce the covariant derivative%
\[
\begin{array}{l}
\medskip \nabla _{X}D=X^{1}\nabla _{\dfrac{\delta }{\delta t}}D+X^{p}\nabla
_{\dfrac{\delta }{\delta x^{p}}}D+X_{(1)}^{(p)}\nabla _{\dfrac{\partial }{%
\partial y_{1}^{p}}}D=\left\{ X^{1}D_{1k(1)(l)\ldots /1}^{1i(j)(1)\ldots
}+X^{p}\cdot \right. \\ 
\left. \cdot D_{1k(1)(l)\ldots |p}^{1i(j)(1)\ldots
}+X_{(1)}^{(p)}D_{1k(1)(l)\ldots }^{1i(j)(1)\ldots }|_{(p)}^{(1)}\right\} {{%
\dfrac{\delta }{\delta t}}\otimes {\dfrac{\delta }{\delta x^{i}}}\otimes {%
\dfrac{\partial }{\partial y_{1}^{j}}}\otimes dt\otimes dx^{k}\otimes \delta
y_{1}^{l}\ldots ,}%
\end{array}%
\]%
where

$(h_{\mathbb{R}})\hspace{6mm}\left\{ 
\begin{array}{l}
\medskip {D_{1k(1)(l)\ldots /1}^{1i(j)(1)\ldots }={\dfrac{\delta
D_{1k(1)(l)\ldots }^{1i(j)(1)\ldots }}{\delta t}}+D_{1k(1)(l)\ldots
}^{1i(j)(1)\ldots }\bar{G}_{11}^{1}+} \\ 
\medskip +D_{1k(1)(l)\ldots }^{1r(j)(1)\ldots }G_{r1}^{i}+D_{1k(1)(l)\ldots
}^{1i(r)(1)\ldots }G_{(1)(r)1}^{(j)(1)}+\ldots - \\ 
-D_{1k(1)(l)\ldots }^{1i(j)(1)\ldots }\bar{G}_{11}^{1}-D_{1r(1)(l)\ldots
}^{1i(j)(1)\ldots }G_{k1}^{r}-D_{1k(1)(r)\ldots }^{1i(j)(1)\ldots
}G_{(1)(l)1}^{(r)(1)}-\ldots ,%
\end{array}%
\right. \medskip$

$(h_{M})\hspace{5mm}\left\{ 
\begin{array}{l}
\medskip {D_{1k(1)(l)\ldots |p}^{1i(j)(1)\ldots }={\dfrac{\delta
D_{1k(1)(l)\ldots }^{1i(j)(1)\ldots }}{\delta x^{p}}}+D_{1k(1)(l)\ldots
}^{1i(j)(1)\ldots }\bar{L}_{1p}^{1}+} \\ 
\medskip +D_{1k(1)(l)\ldots }^{1r(j)(1)\ldots }L_{rp}^{i}+D_{1k(1)(l)\ldots
}^{1i(r)(1)\ldots }L_{(1)(r)p}^{(j)(1)}+\ldots - \\ 
-D_{1k(1)(l)\ldots }^{1i(j)(1)\ldots }\bar{L}_{1p}^{1}-D_{1r(1)(l)\ldots
}^{1i(j)(1)\ldots }L_{kp}^{r}-D_{1k(1)(r)\ldots }^{1i(j)(1)\ldots
}L_{(1)(l)p}^{(r)(1)}-\ldots ,%
\end{array}%
\right. \medskip$

$(v)\hspace{8mm}\left\{ 
\begin{array}{l}
\medskip {D_{1k(1)(l)\ldots }^{1i(j)(1)\ldots }|_{(p)}^{(1)}={\dfrac{%
\partial D_{1k(1)(l)\ldots }^{1i(j)(1)\ldots }}{\partial y_{1}^{p}}}%
+D_{1k(1)(l)\ldots }^{1i(j)(1)\ldots }\bar{C}_{1(p)}^{1(1)}+} \\ 
\medskip +D_{1k(1)(l)\ldots }^{1r(j)(1)\ldots
}C_{r(p)}^{i(1)}+D_{1k(1)(l)\ldots }^{1i(r)(1)\ldots
}C_{(1)(r)(p)}^{(j)(1)(1)}+\ldots - \\ 
-D_{1k(1)(l)\ldots }^{1i(j)(1)\ldots }\bar{C}_{1(p)}^{1(1)}-D_{1r(1)(l)%
\ldots }^{1i(j)(1)\ldots }C_{k(p)}^{r(1)}-D_{1k(1)(r)\ldots
}^{1i(j)(1)\ldots }C_{(1)(l)(p)}^{(r)(1)(1)}-\ldots .%
\end{array}%
\right. $

\begin{definition}
The local derivative operators \textquotedblright $_{/1}$\textquotedblright
, \textquotedblright $_{|p}$\textquotedblright\ and \textquotedblright $%
|_{(p)}^{(1)}$\textquotedblright\ are called the $\mathbb{R}$\textbf{%
-horizontal}\textit{, }$M$\textbf{-horizontal }and \textbf{vertical
covariant derivatives produced by the }$\Gamma $\textbf{-linear connection }$%
\nabla \Gamma $.
\end{definition}

\section{$h$-Normal $\Gamma $-linear connections}

\hspace{5mm}The big number (nine) of components that characterize a general $%
\Gamma $-linear connection $\nabla $ on the 1-jet space $E=J^{1}(\mathbb{R}%
,M)$ determines us to consider the following geometrical concept:

\begin{definition}
A $\Gamma $-linear connection $\nabla $ on $E$, whose local components (\ref%
{comp_lin_conn}) verify the relations 
\[
\bar{G}_{11}^{1}=\varkappa _{11}^{1},\quad \bar{L}_{1j}^{1}=0,\quad \bar{C}%
_{1(k)}^{1(1)}=0,\quad \nabla \mathbf{J}=0, 
\]%
where $h=(h_{11}(t))$ is a Riemannian metric on $\mathbb{R}$, $\varkappa
_{11}^{1}$ is its Christoffel symbol and $\mathbf{J}$ is the $h$%
-normalization d-tensor field, is called an $h$\textbf{-normal }$\Gamma $%
\textbf{-linear connection} on $E$.
\end{definition}

\begin{remark}
The condition $\nabla \mathbf{J}=0$ is equivalent with the local equalities%
\[
\mathbf{J}_{(1)1j/1}^{(i)}=0,\quad \mathbf{J}_{(1)1j|k}^{(i)}=0,\quad 
\mathbf{J}_{(1)1j}^{(i)}|_{(k)}^{(1)}=0, 
\]%
where \textquotedblright $_{/1}$\textquotedblright , \textquotedblright $%
_{|k}$\textquotedblright\ and \textquotedblright $|_{(k)}^{(1)}$%
\textquotedblright\ represent the $\mathbb{R}$-horizontal, $M$-horizontal
and vertical local covariant derivatives produced by the $\Gamma $-linear
connection $\nabla \Gamma $.
\end{remark}

In this context, we can prove the following important local geometrical
result:

\begin{theorem}
\label{TH-h-normal} The components of an $h$-normal $\Gamma $-linear
connection $\nabla $ verify the identities:%
\begin{equation}
\begin{array}{lll}
\medskip \bar{G}_{11}^{1}=\varkappa _{11}^{1}, & \bar{L}_{1j}^{1}=0, & \bar{C%
}_{1(k)}^{1(1)}=0, \\ 
\medskip G_{(1)(i)1}^{(k)(1)}=G_{i1}^{k}-\delta _{i}^{k}\varkappa _{11}^{1},
& L_{(1)(i)j}^{(k)(1)}=L_{ij}^{k}, & 
C_{(1)(i)(j)}^{(k)(1)(1)}=C_{i(j)}^{k(1)}.%
\end{array}
\label{h-normal-relations}
\end{equation}
\end{theorem}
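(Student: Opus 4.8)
The plan is to notice that the three identities displayed in the first row of (\ref{h-normal-relations}) are literally the defining conditions of an $h$-normal $\Gamma$-linear connection and so carry no content; the whole substance of the theorem sits in the second row. My strategy is to derive these three relations from the single hypothesis $\nabla \mathbf{J}=0$, which by the Remark preceding the theorem is equivalent to the three scalar equations $\mathbf{J}_{(1)1j/1}^{(i)}=0$, $\mathbf{J}_{(1)1j|k}^{(i)}=0$ and $\mathbf{J}_{(1)1j}^{(i)}|_{(k)}^{(1)}=0$, one for each of the three covariant derivatives.

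First I would record two elementary facts about the $h$-normalization d-tensor $\mathbf{J}_{(1)1j}^{(i)}=h_{11}\delta_j^i$. Since $h_{11}$ depends on $t$ alone, it is annihilated by $\delta/\delta x^k$ and by $\partial/\partial y_1^k$, so $\delta h_{11}/\delta x^k=0$ and $\partial h_{11}/\partial y_1^k=0$; whereas along the temporal direction $\delta h_{11}/\delta t=dh_{11}/dt=2h_{11}\varkappa_{11}^1$, the last equality being immediate from $\varkappa_{11}^1=\tfrac{h^{11}}{2}\tfrac{dh_{11}}{dt}$ together with $h^{11}=1/h_{11}$.

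Next I would expand each covariant derivative from the general formulas $(h_{\mathbb{R}})$, $(h_M)$, $(v)$, treating $\mathbf{J}$ as a d-tensor carrying one upper vertical index $(i)$ (with its companion $(1)$), one genuine lower temporal index $1$, and one lower spatial index $j$. For the $\mathbb{R}$-horizontal derivative this yields
\[
\mathbf{J}_{(1)1j/1}^{(i)}=\frac{\delta(h_{11}\delta_j^i)}{\delta t}+h_{11}\,G_{(1)(j)1}^{(i)(1)}-h_{11}\delta_j^i\,\bar{G}_{11}^1-h_{11}\,G_{j1}^i ,
\]
and inserting the two facts above together with $\bar{G}_{11}^1=\varkappa_{11}^1$ and dividing by $h_{11}$ collapses the equation $\mathbf{J}_{(1)1j/1}^{(i)}=0$ precisely to $G_{(1)(i)1}^{(k)(1)}=G_{i1}^k-\delta_i^k\varkappa_{11}^1$ after renaming the free indices. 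The $M$-horizontal and vertical equations are handled identically: because $h_{11}$ is killed by $\delta/\delta x^k$ and $\partial/\partial y_1^k$, and because $\bar{L}_{1k}^1=0$ and $\bar{C}_{1(k)}^{1(1)}=0$, the derivatives of $\mathbf{J}$ reduce to $h_{11}(L_{(1)(i)j}^{(k)(1)}-L_{ij}^k)$ and $h_{11}(C_{(1)(i)(j)}^{(k)(1)(1)}-C_{i(j)}^{k(1)})$, which upon vanishing give the remaining two identities.

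The only step demanding genuine care is the index bookkeeping in the three covariant-derivative formulas. I must make sure that the vertical index $(i)$ is differentiated together with its companion as a single unit through the coefficient $G_{(1)(r)1}^{(i)(1)}$ (respectively $L$, $C$), that the genuine lower temporal index of $\mathbf{J}$ supplies the $\bar{G}_{11}^1$ (respectively $\bar{L}$, $\bar{C}$) term with the correct sign, and that the companion $(1)$ does \emph{not} generate a spurious second temporal term. Once this correspondence between the abstract slots of $\mathbf{J}$ and the connection coefficients is pinned down, the rest is routine, since in each equation only the derivative of $h_{11}$ and two connection terms survive; a useful consistency check is the Berwald connection of the earlier Example, where $G_{i1}^k=0$ and indeed $G_{(1)(i)1}^{(k)(1)}=-\delta_i^k\varkappa_{11}^1$, exactly as the derived identity predicts.
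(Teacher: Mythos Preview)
Your proposal is correct and follows essentially the same approach as the paper: the first three identities are noted as definitional, and the last three are obtained by writing out the covariant derivatives $\mathbf{J}_{(1)1j/1}^{(i)}$, $\mathbf{J}_{(1)1j|k}^{(i)}$, $\mathbf{J}_{(1)1j}^{(i)}|_{(k)}^{(1)}$ of the $h$-normalization d-tensor and cancelling the factor $h_{11}$. The only cosmetic difference is that the paper packages the temporal derivative via the first-kind Christoffel symbol $\varkappa_{111}=h_{11}\varkappa_{11}^{1}$, whereas you use the equivalent identity $dh_{11}/dt=2h_{11}\varkappa_{11}^{1}$ directly.
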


\begin{proof}
The first three relations from (\ref{h-normal-relations}) are a direct
consequence of the definition of an $h$-normal $\Gamma $-linear connection $%
\nabla $.

The condition $\nabla \mathbf{J}=0$ implies the local relations%
\[
{h_{11}G_{(1)(j)1}^{(i)(1)}=h_{11}G_{j1}^{i}+\delta _{j}^{i}\left[ \varkappa
_{111}-{\dfrac{dh_{11}}{dt}}\right] }, 
\]%
\[
\begin{array}{cc}
h_{11}L_{(1)(j)k}^{(i)(1)}=h_{11}L_{jk}^{i}, & 
h_{11}C_{(1)(j)(k)}^{(i)(1)(1)}=h_{11}C_{j(k)}^{i(1)},%
\end{array}%
\]%
where $\varkappa _{111}=\varkappa _{11}^{1}h_{11}$ represent the Christoffel
symbols of first kind attached to the Riemannian metric $h_{11}(t)$.
Contracting the above relations with the inverse $h^{11}=1/h_{11}$, we
obtain the last three identities from (\ref{h-normal-relations}).
\end{proof}

\begin{remark}
The Theorem \ref{TH-h-normal} implies that an $h$-normal $\Gamma $-linear
connection $\nabla $ is determined by \textbf{four} \underline{effective}
local components (instead of \textbf{two} effective \nolinebreak local
components for an $N$-linear connection in the Miron-Anastasiei case \cite%
{Mir-An} or \cite{Mir-Shimada}, pp. 21), namely%
\begin{equation}
\nabla \Gamma =\left( \varkappa
_{11}^{1},G_{i1}^{k},L_{ij}^{k},C_{i(j)}^{k(1)}\right) .
\label{h-normal-gamma-linear-connect}
\end{equation}%
The other five components of $\nabla $ cancel or depend by the above four
components, via the formulas (\ref{h-normal-relations}).
\end{remark}

\begin{example}
The canonical Berwald $\mathring{\Gamma}$-linear connection associated to
the pair of Riemannian metrics $(h_{11}(t),\varphi _{ij}(x))$ is an $h$%
-normal $\mathring{\Gamma}$-linear connection, defined by the local
components $B\mathring{\Gamma}=\left( \varkappa _{11}^{1},0,\gamma
_{ij}^{k},0\right) .$
\end{example}

The study of adapted components of the torsion tensor $\mathbf{T}$ and
curvature tensor $\mathbf{R}$ of an arbitrary $\Gamma $-linear connection $%
\nabla $ on $E=J^{1}(\mathbb{R},M)$ was completely done in the paper \cite%
{Neagu-Stoica}. In that paper, we proved that the torsion tensor $\mathbf{T}$
is determined by \textit{ten} effective local d-tensors, while the curvature
tensor $\mathbf{R}$ is determined by \textit{fifteen} effective local
d-tensors. In the sequel, we study the adapted components of the torsion and
curvature tensors for an $h$-normal $\Gamma $-linear connection $\nabla $
given by (\ref{h-normal-gamma-linear-connect}) and (\ref{h-normal-relations}%
).

\begin{theorem}
The torsion tensor $\mathbf{T}$ of an $h$-normal $\Gamma $-linear connection 
$\nabla $ on $E$ is determined by the following \textbf{eight} adapted local
d-tensors (instead of \textbf{ten} in the general case \cite{Neagu-Stoica},
pp. 12): 
\begin{equation}
\begin{tabular}{|c|c|c|c|}
\hline
& $h_{\mathbb{R}}$ & $h_{M}$ & $v$ \\ \hline
$h_{\mathbb{R}}h_{\mathbb{R}}$ & $0$ & $0$ & $0$ \\ \hline
$h_{M}h_{\mathbb{R}}$ & $0$ & $T_{1j}^{r}$ & $R_{(1)1j}^{(r)}$ \\ \hline
$h_{M}h_{M}$ & $0$ & $T_{ij}^{r}$ & $R_{(1)ij}^{(r)}$ \\ \hline
$vh_{\mathbb{R}}$ & $0$ & $0$ & $P_{(1)1(j)}^{(r)\;\;(1)}$ \\ \hline
$vh_{M}$ & $0$ & $P_{i(j)}^{r(1)}$ & $P_{(1)i(j)}^{(r)\;(1)}$ \\ \hline
$vv$ & $0$ & $0$ & $S_{(1)(i)(j)}^{(r)(1)(1)}$ \\ \hline
\end{tabular}
\label{Table-Tors-h-Normal}
\end{equation}%
where\medskip

$\mathbf{1}$\textbf{.} $T_{1j}^{r}=-G_{j1}^{r},\medskip $

$\mathbf{2}$\textbf{.} ${R_{(1)1j}^{(r)}={\dfrac{\delta M_{(1)1}^{(r)}}{%
\delta x^{j}}}-{\dfrac{\delta N_{(1)j}^{(r)}}{\delta t}},}\medskip $

$\mathbf{3}$\textbf{.} $T_{ij}^{r}=L_{ij}^{r}-L_{ji}^{r},\medskip $

$\mathbf{4}$\textbf{.} ${R_{(1)ij}^{(r)}={\dfrac{\delta N_{(1)i}^{(r)}}{%
\delta x^{j}}}-{\dfrac{\delta N_{(1)j}^{(r)}}{\delta x^{i}}},}\medskip $

$\mathbf{5}$\textbf{.} ${P_{(1)1(j)}^{(r)\;\;(1)}={\dfrac{\partial
M_{(1)1}^{(r)}}{\partial y_{1}^{j}}}-G_{j1}^{r}+\delta _{j}^{r}\varkappa
_{11}^{1},}\medskip $

$\mathbf{6}$\textbf{.} $P_{i(j)}^{r(1)}=C_{i(j)}^{r(1)}{,}\medskip $

$\mathbf{7}$\textbf{.} $P_{(1)i(j)}^{(r)\;(1)}={\dfrac{\partial
N_{(1)i}^{(r)}}{\partial y_{1}^{j}}}-L_{ji}^{r},\medskip $

$\mathbf{8}$\textbf{.} ${%
S_{(1)(i)(j)}^{(r)(1)(1)}=C_{i(j)}^{r(1)}-C_{j(i)}^{r(1)}}.$
\end{theorem}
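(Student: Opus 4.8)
The plan is to compute the torsion directly from its coordinate-free definition $\mathbf{T}(X,Y)=\nabla_{X}Y-\nabla_{Y}X-[X,Y]$, evaluating it on every pair drawn from the adapted basis $\{\delta/\delta t,\ \delta/\delta x^{i},\ \partial/\partial y_{1}^{i}\}$ and reading off each of the nine table slots through the decomposition $\mathcal{X}(E)=\mathcal{X}(\mathcal{H}_{\mathbb{R}})\oplus\mathcal{X}(\mathcal{H}_{M})\oplus\mathcal{X}(\mathcal{V})$. Since $\mathbf{T}$ is antisymmetric, only six distinct pairs require treatment, and the pair $(\delta/\delta t,\delta/\delta t)$ contributes nothing, which already accounts for the vanishing first row of the table.

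First I would assemble the two ingredients the calculation rests on. The covariant-derivative terms $\nabla_{X}Y$ are read straight off the defining relations (\ref{comp_lin_conn}), into which I immediately feed the $h$-normal identities (\ref{h-normal-relations}); in particular $\bar{L}_{1j}^{1}=0$, $\bar{C}_{1(k)}^{1(1)}=0$, together with $G_{(1)(i)1}^{(k)(1)}=G_{i1}^{k}-\delta_{i}^{k}\varkappa_{11}^{1}$, $L_{(1)(i)j}^{(k)(1)}=L_{ij}^{k}$ and $C_{(1)(i)(j)}^{(k)(1)(1)}=C_{i(j)}^{k(1)}$. Next I would compute all the Lie brackets of the adapted basis from (\ref{v-a-b}); a short calculation using $[X,Y]^{b}=X^{a}\partial_{a}Y^{b}-Y^{a}\partial_{a}X^{b}$ shows that every bracket is purely vertical, namely $[\delta/\delta t,\delta/\delta x^{j}]=R_{(1)1j}^{(r)}\,\partial/\partial y_{1}^{r}$, $[\delta/\delta x^{i},\delta/\delta x^{j}]=R_{(1)ij}^{(r)}\,\partial/\partial y_{1}^{r}$, $[\delta/\delta t,\partial/\partial y_{1}^{j}]=(\partial M_{(1)1}^{(r)}/\partial y_{1}^{j})\,\partial/\partial y_{1}^{r}$, $[\delta/\delta x^{i},\partial/\partial y_{1}^{j}]=(\partial N_{(1)i}^{(r)}/\partial y_{1}^{j})\,\partial/\partial y_{1}^{r}$, and $[\partial/\partial y_{1}^{i},\partial/\partial y_{1}^{j}]=0$. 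This is precisely where the nonlinear-connection curvature quantities $R_{(1)1j}^{(r)}$ and $R_{(1)ij}^{(r)}$ (formulas $\mathbf{2}$ and $\mathbf{4}$) enter the torsion.

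With these in hand, the six evaluations become routine substitutions. For example $\mathbf{T}(\delta/\delta x^{j},\delta/\delta t)$ has no $\delta/\delta t$ part exactly because $\bar{L}_{1j}^{1}=0$, its $h_{M}$ part is $-G_{j1}^{r}$ (formula $\mathbf{1}$), and its vertical part is the bracket term $R_{(1)1j}^{(r)}$; likewise $\mathbf{T}(\partial/\partial y_{1}^{j},\delta/\delta t)$ loses its $\delta/\delta t$ part because $\bar{C}_{1(j)}^{1(1)}=0$, leaving only the vertical $P_{(1)1(j)}^{(r)\,(1)}$ (formula $\mathbf{5}$) once $G_{(1)(j)1}^{(r)(1)}=G_{j1}^{r}-\delta_{j}^{r}\varkappa_{11}^{1}$ is inserted. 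The remaining pairs yield $T_{ij}^{r}$, $R_{(1)ij}^{(r)}$, $P_{i(j)}^{r(1)}$, $P_{(1)i(j)}^{(r)\,(1)}$ and $S_{(1)(i)(j)}^{(r)(1)(1)}$ in the same manner, the last using the vanishing bracket $[\partial/\partial y_{1}^{i},\partial/\partial y_{1}^{j}]=0$ together with the antisymmetrized $C$-coefficients. Throughout, the index placement and the argument ordering must follow the conventions already fixed by the defining relations (\ref{comp_lin_conn}), so that the signs in $T_{1j}^{r}=-G_{j1}^{r}$ and $T_{ij}^{r}=L_{ij}^{r}-L_{ji}^{r}$ come out consistently.

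The only genuinely substantive point, the reduction from the ten torsion components of the general case to eight here, is that the two $h_{\mathbb{R}}$-valued slots, those sitting in the $h_{M}h_{\mathbb{R}}$ and $vh_{\mathbb{R}}$ rows, are controlled entirely by $\bar{L}_{1j}^{1}$ and $\bar{C}_{1(k)}^{1(1)}$, both of which the $h$-normality hypothesis forces to zero. I expect the main obstacle to be bookkeeping rather than conceptual: keeping the distinction between the $\delta/\delta x$ and $\partial/\partial y_{1}$ derivative operators inside the brackets, tracking the upper/lower $(1)$-index decorations of the d-tensors, and respecting the argument order so that every sign matches the stated formulas. No deeper difficulty should arise beyond this careful accounting.
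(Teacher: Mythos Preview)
Your proposal is correct and amounts to the same computation the paper invokes: the paper simply cites the ten general torsion components from \cite{Neagu-Stoica} and specializes them via (\ref{h-normal-relations}), whereas you redo that underlying calculation from the definition $\mathbf{T}(X,Y)=\nabla_{X}Y-\nabla_{Y}X-[X,Y]$ with the $h$-normal identities inserted from the outset. The only difference is presentational---your route is self-contained and does not require the reader to consult the external reference, while the paper's proof is a one-line specialization; in particular your identification of $\bar L_{1j}^{1}=0$ and $\bar C_{1(j)}^{1(1)}=0$ as the mechanism killing the two $h_{\mathbb{R}}$-valued components (denoted $\bar T_{1j}^{1}$ and $\bar P_{1(j)}^{1(1)}$ in the paper) is exactly the point the paper's proof singles out.
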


\begin{proof}
Particularizing the general local expressions from \cite{Neagu-Stoica}
(which give those ten components of the torsion tensor of a $\Gamma $-linear
connection $\nabla $, in the large) for an $h$-normal $\Gamma $-linear
connection $\nabla $, we deduce that the adapted local components $\bar{T}%
_{1j}^{1}$ and $\bar{P}_{1(j)}^{1(1)}$ vanish, while the other eight ones
from the Table \ref{Table-Tors-h-Normal} are expressed by the preceding
formulas.
\end{proof}

\begin{remark}
The torsion of an $N$-linear connection in the Miron-Anastasiei case is
characterized only by \textbf{five} effective adapted components (please see 
\cite{Mir-An} or \cite{Mir-Shimada}, pp. 24).
\end{remark}

\begin{remark}
For the Berwald $\mathring{\Gamma}$-linear connection $B\mathring{\Gamma}$
associated to the Riemannian metrics $h_{11}(t)$ and $\varphi _{ij}(x)$, all
adapted local torsion d-tensors vanish, except%
\[
R_{(1)ij}^{(k)}=\mathfrak{R}_{mij}^{k}y_{1}^{m}, 
\]%
where $\mathfrak{R}_{mij}^{k}(x)$ are the classical local curvature
components of the Riemannian metric $\varphi _{ij}(x)$.
\end{remark}

The expressions of the local curvature d-tensors of an arbitrary $\Gamma $%
-linear connection, together with the particular properties of an $h$-normal 
$\Gamma $-linear connection, imply a considerable reduction (from \textit{%
fifteen} to \textit{five}) of the \underline{effective} local curvature
d-tensors that characterize an $h$-normal $\Gamma $-linear connection. In
other words, we have

\begin{theorem}
The curvature tensor $\mathbf{R}$ of an $h$-normal $\Gamma $-linear
connection $\nabla $ on $E$ is characterized by \textbf{five} \underline{%
effective} local curvature d-tensors (instead of \textbf{fifteen} in the
general case \cite{Neagu-Stoica}, pp. 14):%
\begin{equation}
\begin{tabular}{|c|c|c|c|}
\hline
& $h_{\mathbb{R}}$ & $h_{M}$ & $v$ \\ \hline
$h_{\mathbb{R}}h_{\mathbb{R}}$ & $0$ & $0$ & $0$ \\ \hline
$h_{M}h_{\mathbb{R}}$ & $0$ & $R_{i1k}^{l}$ & $%
R_{(1)(i)1k}^{(l)(1)}=R_{i1k}^{l}$ \\ \hline
$h_{M}h_{M}$ & $0$ & $R_{ijk}^{l}$ & $R_{(1)(i)jk}^{(l)(1)}=R_{ijk}^{l}$ \\ 
\hline
$vh_{\mathbb{R}}$ & $0$ & $P_{i1(k)}^{l\;\;(1)}$ & $P_{(1)(i)1(k)}^{(l)(1)\;%
\;(1)}=P_{i1(k)}^{l\;\;(1)}$ \\ \hline
$vh_{M}$ & $0$ & $P_{ij(k)}^{l\;(1)}$ & $P_{(1)(i)j(k)}^{(l)(1)%
\;(1)}=P_{ij(k)}^{l\;(1)}$ \\ \hline
$vv$ & $0$ & $S_{i(j)(k)}^{l(1)(1)}$ & $%
S_{(1)(i)(j)(k)}^{(l)(1)(1)(1)}=S_{i(j)(k)}^{l(1)(1)}$ \\ \hline
\end{tabular}
\label{Table-Curv-h-Normal}
\end{equation}%
\medskip where\medskip

$\mathbf{1}$\textbf{.} ${R_{i1k}^{l}={\dfrac{\delta G_{i1}^{l}}{\delta x^{k}}%
}-{\dfrac{\delta L_{ik}^{l}}{\delta t}}%
+G_{i1}^{r}L_{rk}^{l}-L_{ik}^{r}G_{r1}^{l}+C_{i(r)}^{l(1)}R_{(1)1k}^{(r)},}$%
\medskip

$\mathbf{2}$\textbf{.} ${R_{ijk}^{l}={\dfrac{\delta L_{ij}^{l}}{\delta x^{k}}%
}-{\dfrac{\delta L_{ik}^{l}}{\delta x^{j}}}%
+L_{ij}^{r}L_{rk}^{l}-L_{ik}^{r}L_{rj}^{l}+C_{i(r)}^{l(1)}R_{(1)jk}^{(r)},}$%
\medskip

$\mathbf{3}$\textbf{.} ${P_{i1(k)}^{l\;\;(1)}={\dfrac{\partial G_{i1}^{l}}{%
\partial y_{1}^{k}}}-C_{i(k)/1}^{l(1)}+C_{i(r)}^{l(1)}P_{(1)1(k)}^{(r)\;%
\;(1)},}$\medskip

$\mathbf{4}$\textbf{.} ${P_{ij(k)}^{l\;(1)}={\dfrac{\partial L_{ij}^{l}}{%
\partial y_{1}^{k}}}-C_{i(k)|j}^{l(1)}+C_{i(r)}^{l(1)}P_{(1)j(k)}^{(r)\;(1)},%
}$\medskip

$\mathbf{5}$\textbf{.} ${S_{i(j)(k)}^{l(1)(1)}={\dfrac{\partial
C_{i(j)}^{l(1)}}{\partial y_{1}^{k}}}-{\dfrac{\partial C_{i(k)}^{l(1)}}{%
\partial y_{1}^{j}}}%
+C_{i(j)}^{r(1)}C_{r(k)}^{l(1)}-C_{i(k)}^{r(1)}C_{r(j)}^{l(1)}.}$
\end{theorem}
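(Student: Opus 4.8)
The plan is to derive the table (\ref{Table-Curv-h-Normal}) by specializing, rather than recomputing from scratch, the fifteen general curvature d-tensors obtained in \cite{Neagu-Stoica}. Those fifteen arise by expanding $\mathbf{R}(X,Y)Z=\nabla_X\nabla_Y Z-\nabla_Y\nabla_X Z-\nabla_{[X,Y]}Z$ on the adapted triples drawn from (\ref{ad-basis-vf}); and since $\nabla h_{\mathbb{R}}=\nabla h_M=\nabla v=0$, each operator $\mathbf{R}(X,Y)$ preserves the three distributions. Hence the output splits into three families: an $\mathcal{H}_{\mathbb{R}}$-valued (``barred'') family built from $\bar G_{11}^1,\bar L_{1j}^1,\bar C_{1(k)}^{1(1)}$, an $\mathcal{H}_M$-valued family $R_{ijk}^l,P_{ij(k)}^{l(1)},S_{i(j)(k)}^{l(1)(1)},\dots$, and a $\mathcal{V}$-valued family carrying the $(1)(i)\dots$ decorations. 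The whole argument then consists in inserting the $h$-normal identities (\ref{h-normal-relations}) into these three families.

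First I would dispose of the barred family, i.e. the all-zero first column of (\ref{Table-Curv-h-Normal}). Substituting $\bar G_{11}^1=\varkappa_{11}^1$, $\bar L_{1j}^1=0$, $\bar C_{1(k)}^{1(1)}=0$, every barred curvature reduces to derivatives of $\varkappa_{11}^1$ together with products each of which still carries a factor $\bar L$ or $\bar C$. Since $\varkappa_{11}^1=\varkappa_{11}^1(t)$ depends only on the time coordinate, one has $\delta\varkappa_{11}^1/\delta x^k=0$ and $\partial\varkappa_{11}^1/\partial y_1^k=0$, so every surviving term vanishes and the barred family is identically zero.

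Next I would treat the $\mathcal{V}$-valued family and show it coincides with the $\mathcal{H}_M$-valued one. For the three curvatures built solely from $L$ and $C$ — namely $R_{ijk}^l$, $P_{ij(k)}^{l(1)}$ and $S_{i(j)(k)}^{l(1)(1)}$ — the coincidence is immediate, because the decorated coefficients equal the undecorated ones verbatim, $L_{(1)(i)j}^{(k)(1)}=L_{ij}^k$ and $C_{(1)(i)(j)}^{(k)(1)(1)}=C_{i(j)}^{k(1)}$; so the $\mathcal{V}$-valued formula is the $\mathcal{H}_M$-valued formula term by term, yielding the bottom three equalities in the right-hand column of (\ref{Table-Curv-h-Normal}).

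The delicate case, which I expect to be the main obstacle, is the pair involving $G$: the identity $R_{(1)(i)1k}^{(l)(1)}=R_{i1k}^l$ and its companion $P_{(1)(i)1(k)}^{(l)(1)(1)}=P_{i1(k)}^{l(1)}$. Here the decorated coefficient carries a correction $G_{(1)(i)1}^{(k)(1)}=G_{i1}^k-\delta_i^k\varkappa_{11}^1$, and one must verify that this correction leaves no residue. Substituting it into the general $\mathcal{V}$-valued formula, the derivative of the correction drops out, since $\delta(\delta_i^l\varkappa_{11}^1)/\delta x^k=0$ and $\partial(\delta_i^l\varkappa_{11}^1)/\partial y_1^k=0$ because $\varkappa_{11}^1=\varkappa_{11}^1(t)$; meanwhile the two product terms produced by the correction, $-\delta_i^r\varkappa_{11}^1 L_{rk}^l$ and $+L_{ik}^r\delta_r^l\varkappa_{11}^1$, both reduce to $\varkappa_{11}^1 L_{ik}^l$ and cancel, the opposite signs being dictated by the commutator structure of $\mathbf{R}$. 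One must also check that the correction is inert inside the covariant-derivative term $C_{i(k)/1}^{l(1)}$, which again holds because the extra $\varkappa_{11}^1$-contributions are $t$-only and diagonal. After these cancellations the $\mathcal{V}$-valued expression reduces exactly to $R_{i1k}^l$ and $P_{i1(k)}^{l(1)}$ as listed in formulas $\mathbf{1}$ and $\mathbf{3}$, which completes the identification of the five effective d-tensors and establishes the theorem.
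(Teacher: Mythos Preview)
Your proposal is correct and follows essentially the same approach as the paper: both specialize the fifteen general curvature d-tensors of \cite{Neagu-Stoica} by inserting the $h$-normal identities (\ref{h-normal-relations}). You supply considerably more computational detail than the paper's one-sentence proof, in particular by explicitly tracking the $-\delta_i^k\varkappa_{11}^1$ correction in $G_{(1)(i)1}^{(k)(1)}$ and verifying that it cancels in the product terms and is annihilated by $\delta/\delta x^k$ and $\partial/\partial y_1^k$ since $\varkappa_{11}^1$ depends on $t$ alone.
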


\begin{proof}
The general formulas that express those fifteen local curvature d-tensors of
an arbitrary $\Gamma $-linear connection \cite{Neagu-Stoica}, applied to the
particular case of an $h$-normal $\Gamma $-linear connection $\nabla $ on $E$%
, imply the preceding formulas and the relations from the Table \ref%
{Table-Curv-h-Normal}.
\end{proof}

\begin{remark}
The curvature of an $N$-linear connection in the Miron-Anastasiei case is
characterized only by \textbf{three} effective adapted components (please
see \cite{Mir-An} or \cite{Mir-Shimada}, pp. 25).
\end{remark}

\begin{remark}
For the Berwald $\mathring{\Gamma}$-linear connection $B\mathring{\Gamma}$
associated to the Riemannian metrics $h_{11}(t)$ and $\varphi _{ij}(x)$, all
local curvature d-tensors vanish, except 
\[
R_{(1)(i)jk}^{(l)(1)}=R_{ijk}^{l}=\mathfrak{R}_{ijk}^{l}, 
\]%
where $\mathfrak{R}_{ijk}^{l}(x)$ are the local curvature tensors of the
Riemannian metric $\varphi _{ij}(x)$.
\end{remark}

\section{d-Connections of Cartan type. Local Ricci and Bianchi identities}

\hspace{5mm}Because of the reduced number and the simplified form of the
local torsion and curvature d-tensors of an $h$-normal $\Gamma $-linear
connection $\nabla $ on the 1-jet space $E$, the number of attached \textit{%
local Ricci and Bianchi identities} considerably simplifies. A substantial
reduction of these identities obtains considering the more particular case
of an $h$-normal $\Gamma $-linear connection of \textit{Cartan type.}

\begin{definition}
An $h$-normal $\Gamma $-linear connection on $E=J^{1}(\mathbb{R},M)$, whose
local components%
\[
\nabla \Gamma =\left( \varkappa
_{11}^{1},G_{i1}^{k},L_{ij}^{k},C_{i(j)}^{k(1)}\right) 
\]%
verify the supplementary conditions $L_{ij}^{k}=L_{ji}^{k}$ and $%
C_{i(j)}^{k(1)}=C_{j(i)}^{k(1)},$ is called an $h$\textbf{-normal }$\Gamma $%
\textbf{-linear connection of Cartan type}.
\end{definition}

\begin{remark}
In the particular case of an $h$-normal $\Gamma $-linear connection of
Cartan type, the conditions $L_{ij}^{k}=L_{ji}^{k}$ and $%
C_{i(j)}^{k(1)}=C_{j(i)}^{k(1)}$ imply the torsion equalities%
\[
\begin{array}{cc}
T_{ij}^{k}=0, & {S_{(1)(i)(j)}^{(k)(1)(1)}=0}.%
\end{array}%
\]
\end{remark}

Rewriting the local Ricci identities of a $\Gamma $-linear connection $%
\nabla $ (described in the large in \cite{Neagu-Stoica}, pp. 15), for the
particular case of an $h$-normal $\Gamma $-linear connection of Cartan type,
we find a simplified form of these identities. Consequently, we obtain

\begin{theorem}
The following \textbf{local Ricci identities} for an $h$-normal $\Gamma $%
-linear connection of Cartan type are true:$\bigskip$\newline
$(h_{\mathbb{R}})\mbox{\hspace{3mm}}\left\{ 
\begin{array}{l}
\medskip
X_{/1|k}^{1}-X_{|k/1}^{1}=-X_{|r}^{1}T_{1k}^{r}-X^{1}|_{(r)}^{(1)}R_{(1)1k}^{(r)}
\\ 
\medskip X_{|j|k}^{1}-X_{|k|j}^{1}=-X^{1}|_{(r)}^{(1)}R_{(1)jk}^{(r)} \\ 
\medskip
X_{/1}^{1}|_{(k)}^{(1)}-X^{1}|_{(k)/1}^{(1)}=-X^{1}|_{(r)}^{(1)}P_{(1)1(k)}^{(r)\;\;(1)}
\\ 
\medskip
X_{|j}^{1}|_{(k)}^{(1)}-X^{1}|_{(k)|j}^{(1)}=-X_{|r}^{1}C_{j(k)}^{r(1)}-X^{1}|_{(r)}^{(1)}P_{(1)j(k)}^{(r)\;(1)}
\\ 
X^{1}|_{(j)}^{(1)}|_{(k)}^{(1)}-X^{1}|_{(k)}^{(1)}|_{(j)}^{(1)}=0,%
\end{array}%
\right. \bigskip$\newline
$(h_{M})\mbox{\hspace{2mm}}\left\{ 
\begin{array}{l}
\medskip
X_{/1|k}^{i}-X_{|k/1}^{i}=X^{r}R_{r1k}^{i}-X_{|r}^{i}T_{1k}^{r}-X^{i}|_{(r)}^{(1)}R_{(1)1k}^{(r)}
\\ 
\medskip
X_{|j|k}^{i}-X_{|k|j}^{i}=X^{r}R_{rjk}^{i}-X^{i}|_{(r)}^{(1)}R_{(1)jk}^{(r)}
\\ 
\medskip
X_{/1}^{i}|_{(k)}^{(1)}-X^{i}|_{(k)/1}^{(1)}=X^{r}P_{r1(k)}^{i\;%
\;(1)}-X^{i}|_{(r)}^{(1)}P_{(1)1(k)}^{(r)\;\;(1)} \\ 
\medskip X_{|j}^{i}|_{(k)}^{(1)}-X^{i}|_{(k)|j}^{(1)}=X^{r}P_{rj(k)}^{i\;\
(1)}-X_{|r}^{i}C_{j(k)}^{r(1)}-X^{i}|_{(r)}^{(1)}P_{(1)j(k)}^{(r)\;(1)} \\ 
X^{i}|_{(j)}^{(1)}|_{(k)}^{(1)}-X^{i}|_{(k)}^{(1)}|_{(j)}^{(1)}=X^{r}S_{r(j)(k)}^{i(1)(1)},%
\end{array}%
\right. \bigskip$\newline
$\bigskip(v)\mbox{\hspace{6mm}}\left\{ 
\begin{array}{l}
\medskip
X_{(1)/1|k}^{(i)}-X_{(1)|k/1}^{(i)}=X_{(1)}^{(r)}R_{r1k}^{i}-X_{(1)|r}^{(i)}T_{1k}^{r}-X_{(1)}^{(i)}|_{(r)}^{(1)}R_{(1)1k}^{(r)}
\\ 
\medskip
X_{(1)|j|k}^{(i)}-X_{(1)|k|j}^{(i)}=X_{(1)}^{(r)}R_{rjk}^{i}-X_{(1)}^{(i)}|_{(r)}^{(1)}R_{(1)jk}^{(r)}
\\ 
\medskip
X_{(1)/1}^{(i)}|_{(k)}^{(1)}-X_{(1)}^{(i)}|_{(k)/1}^{(1)}=X_{(1)}^{(r)}P_{r1(k)}^{i\;\;(1)}-X_{(1)}^{(i)}|_{(r)}^{(1)}P_{(1)1(k)}^{(r)\;\;(1)}
\\ 
\medskip
X_{(1)|j}^{(i)}|_{(k)}^{(1)}-X_{(1)}^{(i)}|_{(k)|j}^{(1)}=X_{(1)}^{(r)}P_{rj(k)}^{i\;\ (1)}-X_{(1)|r}^{(i)}C_{j(k)}^{r(1)}-X_{(1)}^{(i)}|_{(r)}^{(1)}P_{(1)j(k)}^{(r)\;(1)}
\\ 
X_{(1)}^{(i)}|_{(j)}^{(1)}|_{(k)}^{(1)}-X_{(1)}^{(i)}|_{(k)}^{(1)}|_{(j)}^{(1)}=X_{(1)}^{(r)}S_{r(j)(k)}^{i(1)(1)},%
\end{array}%
\right. $\newline
where%
\[
{X=X^{1}{\frac{\delta }{\delta t}}+X^{i}{\frac{\delta }{\delta x^{i}}}%
+X_{(1)}^{(i)}{\frac{\partial }{\partial y_{1}^{i}}}} 
\]%
is an arbitrary distinguished vector field on the 1-jet space $E=J^{1}(%
\mathbb{R},M)$.
\end{theorem}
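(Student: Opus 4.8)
The plan is to derive all three blocks of identities from the single structural relation between the curvature of $\nabla$ and the commutator of covariant derivatives: for any $U,V\in\mathcal{X}(E)$ one has $\nabla_U\nabla_V-\nabla_V\nabla_U-\nabla_{[U,V]}=\mathbf{R}(U,V)$, so everything reduces to (i) computing the Lie brackets of the adapted frame $\left\{\delta/\delta t,\ \delta/\delta x^i,\ \partial/\partial y_1^i\right\}$ and (ii) reading off the adapted components of $\mathbf{R}(U,V)$ from Table~\ref{Table-Curv-h-Normal}. First I would compute the brackets; since the frame is built from the nonlinear connection via (\ref{v-a-b}), each bracket splits into a horizontal and a vertical part, and the vertical parts are exactly the torsion d-tensors of Table~\ref{Table-Tors-h-Normal}. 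For instance the bracket $[\delta/\delta x^j,\ \delta/\delta x^k]$ is purely vertical and proportional to $R_{(1)jk}^{(r)}\,\partial/\partial y_1^r$, the bracket $[\delta/\delta t,\ \delta/\delta x^k]$ is vertical and proportional to $R_{(1)1k}^{(r)}\,\partial/\partial y_1^r$, while the mixed brackets involving $\partial/\partial y_1^k$ reproduce the $P$-type torsions. These vertical bracket terms, fed into $-\nabla_{[U,V]}$, are precisely the origin of the torsion contributions on the right-hand sides.

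Next I would substitute an arbitrary d-vector field $X=X^1\,\delta/\delta t+X^i\,\delta/\delta x^i+X_{(1)}^{(i)}\,\partial/\partial y_1^i$ into this commutator relation, pairing the two differentiation directions over the five combinations $(h_{\mathbb{R}},h_M)$, $(h_M,h_M)$, $(h_{\mathbb{R}},v)$, $(h_M,v)$, $(v,v)$ that appear in the statement. Expanding $\nabla_U\nabla_V X$ component-wise by means of the three local covariant-derivative operators ``$_{/1}$'', ``$_{|p}$'', ``$|_{(p)}^{(1)}$'' introduced after (\ref{comp_lin_conn}) converts each commutator into an equation among the iterated derivatives of $X^1$, $X^i$ and $X_{(1)}^{(i)}$. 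The three blocks $(h_{\mathbb{R}})$, $(h_M)$, $(v)$ of the theorem are then nothing but the $\delta/\delta t$-, $\delta/\delta x^i$- and $\partial/\partial y_1^i$-components of these equations, since $\mathbf{R}(U,V)X=X^D\,\mathbf{R}(U,V)E_D$ contributes a term $X^D R^C_{D\,UV}$ in the $C$-component.

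The decisive simplifications come from specializing to the $h$-normal Cartan type. First, by Table~\ref{Table-Curv-h-Normal} the temporal-output column of the curvature vanishes identically, so $\mathbf{R}(U,V)$ never produces a $\delta/\delta t$-component; this is exactly why the $(h_{\mathbb{R}})$ block carries only torsion terms and no curvature contribution, whereas the spatial block gains a genuine $X^r R_{r\,UV}^i$ term. Second, Theorem~\ref{TH-h-normal} forces the vertical connection coefficients to coincide with the horizontal ones up to the universal $-\delta_i^k\varkappa_{11}^1$ shift, whence $R_{(1)(i)jk}^{(l)(1)}=R_{ijk}^l$, $P_{(1)(i)j(k)}^{(l)(1)\,(1)}=P_{ij(k)}^{l\,(1)}$, and similarly for the remaining curvature d-tensors; this is what makes the $(h_M)$ and $(v)$ blocks carry identical curvature and torsion tensors. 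Third, the Cartan-type symmetries $L_{ij}^k=L_{ji}^k$ and $C_{i(j)}^{k(1)}=C_{j(i)}^{k(1)}$ kill $T_{ij}^k$ and $S_{(1)(i)(j)}^{(k)(1)(1)}$, which removes the torsion term from the second and fifth identities of each block.

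The main obstacle is organizational rather than conceptual: for each of the five pairings of differentiation directions and each of the three component types of $X$, one must carry out the full component expansion, collect the numerous connection-coefficient terms, and verify that all of them cancel except those reproducing the stated right-hand sides. The one genuinely delicate point is tracking the temporal-index terms governed by $\bar{G}_{11}^1=\varkappa_{11}^1$, $\bar{L}_{1j}^1=0$ and $\bar{C}_{1(k)}^{1(1)}=0$; these must be handled carefully to confirm that the temporal contributions either cancel or collapse exactly into the torsion tensors $T_{1k}^r$, $R_{(1)1k}^{(r)}$ and $P_{(1)1(k)}^{(r)\,(1)}$. Once the brackets and the curvature formulas are in hand the remainder is disciplined bookkeeping, most efficiently executed by quoting the general $\Gamma$-linear Ricci identities of \cite{Neagu-Stoica} and substituting the reductions (\ref{h-normal-relations}) together with the Cartan-type vanishing conditions.
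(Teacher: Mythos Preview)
Your proposal is correct and aligns with the paper's own approach. The paper does not give a formal proof for this theorem; it simply states (in the sentence preceding the theorem) that one ``rewrites the local Ricci identities of a $\Gamma$-linear connection $\nabla$ (described in the large in \cite{Neagu-Stoica}, pp.~15), for the particular case of an $h$-normal $\Gamma$-linear connection of Cartan type.'' Your final paragraph names exactly this route---quote the general identities from \cite{Neagu-Stoica} and substitute the reductions (\ref{h-normal-relations}) together with the Cartan vanishing conditions $T_{ij}^k=0$, $S_{(1)(i)(j)}^{(k)(1)(1)}=0$---and the preceding paragraphs supply the conceptual scaffolding (bracket computations, curvature-commutator relation, vanishing of the temporal curvature column) that the paper leaves implicit.
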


In what follows, let us consider the \textit{canonical jet Liouville
d-vector field}%
\[
\mathbf{C}{=\mathbf{C}_{(1)}^{(i)}{\frac{\partial }{\partial y_{1}^{i}}=}}%
y_{1}^{i}{{\frac{\partial }{\partial y_{1}^{i}}}}, 
\]%
and let us construct the \textit{nonmetrical deflection d-tensors}%
\[
\bar{D}_{(1)1}^{(i)}=\mathbf{C}_{(1)/1}^{(i)},\quad D_{(1)j}^{(i)}=\mathbf{C}%
_{(1)|j}^{(i)},\quad d_{(1)(j)}^{(i)(1)}=\mathbf{C}%
_{(1)}^{(i)}|_{(j)}^{(1)}. 
\]%
Then, a direct calculation leads to

\begin{proposition}
The nonmetrical deflection d-tensors attached to the $h$-normal $\Gamma $%
-linear connection $\nabla $ given by (\ref{h-normal-gamma-linear-connect})
have the expressions%
\begin{equation}
\begin{array}{c}
\medskip \bar{D}_{(1)1}^{(i)}=-M_{(1)1}^{(i)}+G_{r1}^{i}y_{1}^{r}-\varkappa
_{11}^{1}y_{1}^{i},\qquad D_{(1)j}^{(i)}=-N_{(1)j}^{(i)}+L_{rj}^{i}y_{1}^{r},
\\ 
d_{(1)(j)}^{(i)(1)}=\delta _{j}^{i}+C_{r(j)}^{i(1)}y_{1}^{r}.%
\end{array}
\label{defl-for-h-normal}
\end{equation}
\end{proposition}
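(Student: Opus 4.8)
The plan is to evaluate the three covariant derivatives of the Liouville field directly, exploiting the fact that its only nonzero adapted component is $\mathbf{C}_{(1)}^{(i)}=y_1^i$, a d-tensor carrying a \emph{single vertical contravariant index}. Under the general covariant-derivative rules of Section \ref{coli}, such an index produces, for each operator, the relevant frame derivative of $y_1^i$ plus exactly one connection correction; no temporal ($1$) or $M$-horizontal index terms arise, since $\mathbf{C}$ has none. Thus I would begin by writing
\[
\mathbf{C}_{(1)/1}^{(i)}=\frac{\delta y_1^i}{\delta t}+y_1^r G_{(1)(r)1}^{(i)(1)},\qquad
\mathbf{C}_{(1)|j}^{(i)}=\frac{\delta y_1^i}{\delta x^j}+y_1^r L_{(1)(r)j}^{(i)(1)},\qquad
\mathbf{C}_{(1)}^{(i)}|_{(j)}^{(1)}=\frac{\partial y_1^i}{\partial y_1^j}+y_1^r C_{(1)(r)(j)}^{(i)(1)(1)}.
\]

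Next I would compute the three frame derivatives of $y_1^i$ from the definitions (\ref{v-a-b}) of the horizontal vector fields. Since $y_1^i$ has vanishing partial derivatives with respect to $t$ and $x^j$, one gets $\delta y_1^i/\delta t=-M_{(1)1}^{(j)}\partial y_1^i/\partial y_1^j=-M_{(1)1}^{(i)}$ and, likewise, $\delta y_1^i/\delta x^j=-N_{(1)j}^{(i)}$, together with the obvious $\partial y_1^i/\partial y_1^j=\delta_j^i$.

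Finally I would substitute the $h$-normality identities (\ref{h-normal-relations}) of Theorem \ref{TH-h-normal}, namely $G_{(1)(r)1}^{(i)(1)}=G_{r1}^i-\delta_r^i\varkappa_{11}^1$, $L_{(1)(r)j}^{(i)(1)}=L_{rj}^i$ and $C_{(1)(r)(j)}^{(i)(1)(1)}=C_{r(j)}^{i(1)}$, into the three expressions above and collect terms; the contraction $\delta_r^i y_1^r=y_1^i$ turns the first correction into $G_{r1}^i y_1^r-\varkappa_{11}^1 y_1^i$, which yields precisely (\ref{defl-for-h-normal}). The computation carries no genuine obstacle; the only point demanding care is the index bookkeeping — confirming that a purely vertical contravariant d-tensor receives no $\bar{G}_{11}^1$, $\bar{L}_{1p}^1$ or $\bar{C}_{1(p)}^{1(1)}$ contributions, and that the $h$-normal coefficients are inserted in the correct slots.
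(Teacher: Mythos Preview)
Your proposal is correct and is exactly the ``direct calculation'' the paper alludes to without writing out: apply the covariant-derivative rules to the single vertical contravariant index of $\mathbf{C}_{(1)}^{(i)}=y_1^i$, evaluate $\delta y_1^i/\delta t$, $\delta y_1^i/\delta x^j$, $\partial y_1^i/\partial y_1^j$ from (\ref{v-a-b}), and then insert the $h$-normal identities (\ref{h-normal-relations}). There is nothing to add or correct.
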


Applying now the preceding $(v)-$ set of local Ricci identities (associated
to an $h$-normal $\Gamma $-linear connection of Cartan type) to the
components of the canonical jet Liouville d-vector field, we find

\begin{theorem}
The following \textbf{five }identities of the nonmetrical deflection
d-tensors associated to an $h$-normal $\Gamma $-linear connection of Cartan
type (instead of \textbf{three} in the Miron-Anastasiei's case \cite{Mir-An}
or \cite{Mir-Shimada}, pp. 80) are true: 
\begin{equation}
\left\{ 
\begin{array}{l}
\medskip \bar{D}%
_{(1)1|k}^{(i)}-D_{(1)k/1}^{(i)}=y_{1}^{r}R_{r1k}^{i}-D_{(1)r}^{(i)}T_{1k}^{r}-d_{(1)(r)}^{(i)(1)}R_{(1)1k}^{(r)}
\\ 
\medskip
D_{(1)j|k}^{(i)}-D_{(1)k|j}^{(i)}=y_{1}^{r}R_{rjk}^{i}-d_{(1)(r)}^{(i)(1)}R_{(1)jk}^{(r)}
\\ 
\medskip \bar{D}%
_{(1)1}^{(i)}|_{(k)}^{(1)}-d_{(1)(k)/1}^{(i)(1)}=y_{1}^{r}P_{r1(k)}^{i\;%
\;(1)}-d_{(1)(r)}^{(i)(1)}P_{(1)1(k)}^{(r)\;\;(1)} \\ 
\medskip
D_{(1)j}^{(i)}|_{(k)}^{(1)}-d_{(1)(k)|j}^{(i)(1)}=y_{1}^{r}P_{rj(k)}^{i\;\
(1)}-D_{(1)r}^{(i)}C_{j(k)}^{r(1)}-d_{(1)(r)}^{(i)(1)}P_{(1)j(k)}^{(r)\;(1)}
\\ 
d_{(1)(j)}^{(i)(1)}|_{(k)}^{(1)}-d_{(1)(k)}^{(i)(1)}|_{(j)}^{(1)}=y_{1}^{r}S_{r(j)(k)}^{i(1)(1)}.%
\end{array}%
\right.  \label{defl-ID-h-normal-Cartan}
\end{equation}
\end{theorem}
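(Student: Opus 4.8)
The plan is to obtain the five identities in (\ref{defl-ID-h-normal-Cartan}) by specializing the vertical block $(v)$ of the local Ricci identities established above to the canonical jet Liouville d-vector field $\mathbf{C}$. Since those Ricci identities hold for an \emph{arbitrary} distinguished vector field $X = X^1 \frac{\delta}{\delta t} + X^i \frac{\delta}{\delta x^i} + X_{(1)}^{(i)} \frac{\partial}{\partial y_1^i}$, I would simply take $X = \mathbf{C}$, that is $X^1 = 0$, $X^i = 0$ and $X_{(1)}^{(i)} = \mathbf{C}_{(1)}^{(i)} = y_1^i$. Because $\mathbf{C}$ is purely vertical, only the $(v)$-set of Ricci identities carries information (the $(h_{\mathbb{R}})$ and $(h_M)$ blocks reduce to the trivial equality $0=0$), and it is precisely this block whose left-hand sides involve the vertical component $X_{(1)}^{(i)}$ and its iterated covariant derivatives.

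The key step is to read off the second-order covariant derivatives appearing in the $(v)$-block through the definitions of the nonmetrical deflection d-tensors. By construction the first-order covariant derivatives of $\mathbf{C}$ are exactly $\bar{D}_{(1)1}^{(i)} = \mathbf{C}_{(1)/1}^{(i)}$, $D_{(1)j}^{(i)} = \mathbf{C}_{(1)|j}^{(i)}$ and $d_{(1)(j)}^{(i)(1)} = \mathbf{C}_{(1)}^{(i)}|_{(j)}^{(1)}$. Hence, after the substitution $X = \mathbf{C}$, each second-order expression on a left-hand side collapses to a single covariant derivative of one of these three deflection d-tensors; for instance $X_{(1)/1|k}^{(i)}$ becomes $\bar{D}_{(1)1|k}^{(i)}$ and $X_{(1)|k/1}^{(i)}$ becomes $D_{(1)k/1}^{(i)}$. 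Simultaneously, on the right-hand sides the bare component $X_{(1)}^{(r)}$ is replaced by $y_1^r$, while the factors $X_{(1)|r}^{(i)}$ and $X_{(1)}^{(i)}|_{(r)}^{(1)}$ become $D_{(1)r}^{(i)}$ and $d_{(1)(r)}^{(i)(1)}$ respectively.

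Carrying this substitution through the five lines of the $(v)$-block produces, term by term, the five relations of (\ref{defl-ID-h-normal-Cartan}); the matching is purely a matter of index bookkeeping, with no cancellation or algebraic simplification required beyond recognizing the deflection d-tensors. I expect the only point demanding care to be exactly this bookkeeping: checking that every covariant-derivative factor lands in the correct slot and that no spurious torsion term is generated. Here the Cartan-type hypotheses $L_{ij}^k = L_{ji}^k$ and $C_{i(j)}^{k(1)} = C_{j(i)}^{k(1)}$ are already built into the Ricci identities being invoked, so the horizontal torsion $T_{ij}^k$ and the vertical torsion $S_{(1)(i)(j)}^{(k)(1)(1)}$ vanish identically and contribute nothing. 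Finally, I would note that the explicit forms (\ref{defl-for-h-normal}) of $\bar{D}_{(1)1}^{(i)}$, $D_{(1)j}^{(i)}$ and $d_{(1)(j)}^{(i)(1)}$ are \emph{not} needed for this derivation: the identities are structural consequences of the Ricci commutation formulas applied to $\mathbf{C}$, and the explicit expressions would only be required if one wished to unfold them further.
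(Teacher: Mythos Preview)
Your proposal is correct and follows exactly the route the paper takes: the theorem is obtained by specializing the $(v)$-block of the local Ricci identities to the canonical Liouville d-vector field $\mathbf{C}$, so that the first-order covariant derivatives of $\mathbf{C}$ become the deflection d-tensors $\bar{D}_{(1)1}^{(i)}$, $D_{(1)j}^{(i)}$, $d_{(1)(j)}^{(i)(1)}$ and the bare component $X_{(1)}^{(r)}$ becomes $y_1^r$. Your write-up is in fact more detailed than the paper's own justification, which consists of the single sentence preceding the theorem.
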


\begin{remark}
The identities (\ref{defl-ID-h-normal-Cartan}) are used in the description
of \textbf{generalized Maxwell equations} that govern the \textbf{%
electromagnetic 2-form} \cite{Neagu-Rheon} produced by a relativistic time
dependent Lagrangian on the 1-jet space $E=J^{1}(\mathbb{R},M)$.
\end{remark}

The using of $h$-normal $\Gamma $-linear connections of Cartan type in the
study of differential geometry of the 1-jet vector bundle $E=J^{1}(\mathbb{R}%
,M)$ is also convenient because the number and the form of the local Bianchi
identities associated to such connections are considerably simplified. In
fact, we have:

\begin{theorem}
The following \textbf{nineteen} effective \textbf{local Bianchi identities}
for the $h$-normal $\Gamma $-linear connections of Cartan type $\nabla $
given by (\ref{h-normal-gamma-linear-connect}) are true:\medskip

$\medskip \mathbf{1}$\textbf{.} $\mathcal{A}_{\{j,k\}}\left\{
R_{j1k}^{l}+T_{1j|k}^{l}+R_{(1)1j}^{(r)}C_{k(r)}^{l(1)}\right\} =0,$

$\medskip \mathbf{2}^{\text{\textbf{*}}}$\textbf{.} $\sum_{\{i,j,k\}}\left\{
R_{ijk}^{l}-R_{(1)ij}^{(r)}C_{k(r)}^{l(1)}\right\} =0,$

$\medskip \mathbf{3}$\textbf{.} $\mathcal{A}_{\{j,k\}}\left\{
R_{(1)1j|k}^{(l)}+T_{1j}^{r}R_{(1)kr}^{(l)}+R_{(1)1j}^{(r)}P_{(1)k(r)}^{(l)%
\;\ (1)}\right\} =$

$\medskip \mbox{\hspace{10mm}}%
=-R_{(1)jk/1}^{(l)}-R_{(1)jk}^{(r)}P_{(1)1(r)}^{(l)\;\;(1)},$

$\medskip \mathbf{4}^{\text{\textbf{*}}}$\textbf{.} $\sum_{\{i,j,k\}}\left\{
R_{(1)ij|k}^{(l)}+R_{(1)ij}^{(r)}P_{(1)k(r)}^{(l)\;\text{\ }(1)}\right\} =0,$

$\medskip \mathbf{5}$\textbf{.} $%
T_{1k}^{l}|_{(p)}^{(1)}-C_{r(p)}^{l(1)}T_{1k}^{r}+P_{k1(p)}^{l\;%
\;(1)}+C_{k(p)/1}^{l(1)}+C_{k(p)}^{r(1)}T_{1r}^{l}-C_{k(r)}^{l(1)}P_{(1)1(p)}^{(r)\;\;(1)}=0, 
$

$\medskip \mathbf{6}^{\text{\textbf{*}}}$\textbf{.} $\mathcal{A}%
_{\{j,k\}}\left\{ C_{j(p)|k}^{l(1)}+C_{k(r)}^{l(1)}P_{(1)j(p)}^{(r)\;\
(1)}+P_{jk(p)}^{l\;\ (1)}\right\} =0,$

$\medskip \mathbf{7}$\textbf{.} $P_{(1)1(p)|k}^{(l)\;%
\;(1)}-P_{(1)k(p)/1}^{(l)\;\;(1)}+P_{(1)k(r)}^{(l)\;\;(1)}P_{(1)1(p)}^{(r)\;%
\;(1)}-P_{(1)1(r)}^{(l)\;\;(1)}P_{(1)k(p)}^{(r)\;\;(1)}=$

$\medskip \mbox{\hspace{10mm}}%
=R_{(1)1k}^{(l)}|_{(p)}^{(1)}-R_{p1k}^{l}+R_{(1)1r}^{(l)}C_{k(p)}^{r(1)}-T_{1k}^{r}P_{(1)r(p)}^{(l)\;\ (1)}, 
$

$\medskip \mathbf{8}^{\text{\textbf{*}}}$\textbf{.} $\mathcal{A}%
_{\{j,k\}}\left\{
R_{(1)jr}^{(l)}C_{k(p)}^{r(1)}+P_{(1)j(r)}^{(l)\;\;(1)}P_{(1)k(p)}^{(r)\;\
(1)}+P_{(1)k(p)|j}^{(l)\;\;(1)}\right\}
=R_{pjk}^{l}-R_{(1)jk}^{(l)}|_{(p)}^{(1)},$

$\mathbf{9}^{\text{\textbf{*}}}$\textbf{.} $\medskip \mathcal{A}_{\left\{
j,k\right\} }\left\{
C_{i(j)}^{l(1)}|_{(k)}^{(1)}+C_{i(k)}^{r(1)}C_{r(j)}^{l(1)}\right\} =$

$\medskip\mbox{\hspace{10mm}}=\mathcal{A}_{\left\{ j,k\right\} }\left\{ 
\dfrac{C_{i(j)}^{l(1)}}{\partial y_{1}^{k}}+C_{i(j)}^{r(1)}C_{r(k)}^{l(1)}%
\right\} =S_{i(j)(k)}^{l(1)(1)},$

$\medskip \mathbf{10}$\textbf{.} $\mathcal{A}_{\left\{ j,k\right\} }\left\{
P_{(1)1(j)}^{(l)\;\;(1)}|_{(k)}^{(1)}+P_{j1(k)}^{l\;\;(1)}\right\} =0,$

$\medskip \mathbf{11}^{\text{\textbf{*}}}$\textbf{.} $\mathcal{A}_{\left\{
j,k\right\} }\left\{ P_{ji(k)}^{l\;\ (1)}+P_{(1)r(j)}^{(l)\;\
(1)}C_{i(k)}^{r(1)}-P_{(1)i(k)}^{(l)\;(1)}|_{(j)}^{(1)}\right\} =0,$

$\medskip \mathbf{12}^{\text{\textbf{*}}}$\textbf{.} $\sum_{\left\{
i,j,k\right\} }S_{i(j)(k)}^{l(1)(1)}=0,$

$\medskip \mathbf{13}$\textbf{.} $\mathcal{A}_{\{j,k\}}\left\{
R_{p1j|k}^{l}+T_{1j}^{r}R_{pkr}^{l}+R_{(1)1j}^{(r)}P_{pk(r)}^{l\;\;(1)}%
\right\} =-R_{pjk/1}^{l}-R_{(1)jk}^{(r)}P_{p1(r)}^{l\;\text{\ }(1)},$

$\medskip \mathbf{14}^{\text{\textbf{*}}}$\textbf{.} $\sum_{\{i,j,k\}}\left%
\{ R_{pij|k}^{l}+R_{(1)ij}^{(r)}P_{pk(r)}^{l\;\ (1)}\right\} =0,$

$\medskip \mathbf{15}$\textbf{.} $P_{i1(p)|k}^{l\;\;(1)}-P_{ik(p)/1}^{l\;%
\;(1)}+P_{(1)1(p)}^{(r)\;\;(1)}P_{ik(r)}^{l\;\;(1)}-P_{(1)k(p)}^{(r)\;%
\;(1)}P_{i1(r)}^{l\;\;(1)}=$

$\medskip \mbox{\hspace{10mm}}%
=R_{i1k}^{l}|_{(p)}^{(1)}+R_{(1)1k}^{(r)}S_{i(p)(r)}^{l(1)(1)}+C_{k(p)}^{r(1)}R_{i1r}^{l}-T_{1k}^{r}P_{ir(p)}^{l\;\;(1)}, 
$

$\medskip \mathbf{16}^{\text{\textbf{*}}}$\textbf{.} $\mathcal{A}%
_{\{j,k\}}\left\{
R_{ijr}^{l}C_{k(p)}^{r(1)}+P_{ij(r)}^{l\;\;(1)}P_{(1)k(p)}^{(r)\;%
\;(1)}+P_{ik(p)|j}^{l\;\;(1)}\right\} =$

$\medskip \mbox{\hspace{10mm}}%
=-S_{i(p)(r)}^{l(1)(1)}R_{(1)jk}^{(r)}-R_{ijk}^{l}|_{(p)}^{(1)},$

$\medskip \mathbf{17}$\textbf{.} $\mathcal{A}_{\left\{ j,k\right\} }\left\{
P_{p1(j)}^{l\;\;(1)}|_{(k)}^{(1)}+P_{(1)1(j)}^{(r)\;%
\;(1)}S_{p(k)(r)}^{l(1)(1)}\right\} =-S_{p(j)(k)/1}^{l(1)(1)},$

$\medskip \mathbf{18}^{\text{\textbf{*}}}$\textbf{.} $\mathcal{A}_{\left\{
j,k\right\} }\left\{
P_{pr(j)}^{l\;\;(1)}C_{i(k)}^{r(1)}-S_{p(j)(r)}^{l(1)(1)}P_{(1)i(k)}^{(r)\;%
\;(1)}-P_{pi(k)}^{l\;\;(1)}|_{(j)}^{(1)}\right\} =-S_{p(j)(k)|i}^{l(1)(1)},$

$\medskip \mathbf{19}^{\text{\textbf{*}}}$\textbf{.}$\mathbf{\;}%
\sum_{\left\{ i,j,k\right\} }S_{p(i)(j)}^{l(1)(1)}|_{(k)}^{(1)}=0,$ where $%
\sum_{\{i,j,k\}}$ represents a cyclic sum and $\mathcal{A}_{\{i,j\}}$
represents an alternate sum.
\end{theorem}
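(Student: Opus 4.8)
The plan is to derive the nineteen Bianchi identities as the adapted-component expression of the general second Bianchi identity for the linear connection $\nabla$, specialized to the $h$-normal Cartan-type case. The starting point is the standard abstract identity
\[
\sum_{\{X,Y,Z\}}\left\{\left(\nabla_{X}\mathbf{T}\right)(Y,Z)-\mathbf{R}(X,Y)Z+\mathbf{T}(\mathbf{T}(X,Y),Z)\right\}=0,
\]
together with its companion for the curvature,
\[
\sum_{\{X,Y,Z\}}\left\{\left(\nabla_{X}\mathbf{R}\right)(Y,Z)W+\mathbf{R}(\mathbf{T}(X,Y),Z)W\right\}=0,
\]
where the cyclic sum runs over the triple $\{X,Y,Z\}$. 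First I would take $X,Y,Z$ to range over the adapted basis vector fields $\{\delta/\delta t,\ \delta/\delta x^{i},\ \partial/\partial y_{1}^{i}\}$ in all distinct combinations. Because the horizontal $h_{\mathbb{R}}$-direction is one-dimensional and the Cartan-type hypotheses force $T_{ij}^{k}=0$ and $S_{(1)(i)(j)}^{(k)(1)(1)}=0$ (see the Remark preceding the statement), many of the $3^{3}$ possible placements collapse: whenever two of the three slots are the single $\delta/\delta t$ direction the cyclic sum degenerates, and the vanishing of the diagonal torsion pieces kills further terms. This bookkeeping is what reduces the a priori large list to exactly nineteen surviving effective identities, the starred ones corresponding to genuinely cyclic (three-index) combinations and the unstarred ones to the alternating (two-index) combinations involving the distinguished time direction.

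Next I would evaluate each surviving combination by expanding the abstract brackets in local adapted components. The curvature brackets $\mathbf{R}(\cdot,\cdot)\cdot$ produce the five effective curvature d-tensors $R_{i1k}^{l},R_{ijk}^{l},P_{i1(k)}^{l\,(1)},P_{ij(k)}^{l\,(1)},S_{i(j)(k)}^{l(1)(1)}$ from Table \ref{Table-Curv-h-Normal} (and their vertical copies, which coincide with the horizontal ones by the $h$-normal identities (\ref{h-normal-relations})); the torsion brackets produce the eight d-tensors from Table \ref{Table-Tors-h-Normal}; and the covariant-derivative terms $\nabla_{X}\mathbf{T}$, $\nabla_{X}\mathbf{R}$ are computed using the three covariant-derivative operators $_{/1}$, $_{|k}$, $|_{(k)}^{(1)}$ defined via $\nabla\Gamma$ earlier in the paper. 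The essential computational input here is that the covariant derivative of a torsion or curvature d-tensor in a given direction reintroduces connection coefficients, and the commutation of two such operators is exactly governed by the Ricci identities already stated in the preceding Theorem; applying those Ricci identities to $\mathbf{T}$ and $\mathbf{R}$ viewed as d-tensors is the mechanism that converts the abstract Jacobi/Bianchi relations into the explicit coefficient equations $\mathbf{1}$ through $\mathbf{19}$.

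The main obstacle will be the purely combinatorial and notational bookkeeping rather than any deep step: one must track the index structure of each mixed d-tensor (the parenthesized vertical indices versus the ordinary horizontal ones) correctly through each covariant differentiation, and one must correctly identify which interior products $\mathbf{T}(\mathbf{T}(X,Y),Z)$ survive once $T_{ij}^{k}$ and $S_{(1)(i)(j)}^{(k)(1)(1)}$ are set to zero but $T_{1j}^{r}$, $R_{(1)1j}^{(r)}$, $R_{(1)ij}^{(r)}$ and the various $P$-torsions remain. The asymmetry introduced by the nonzero temporal torsion $T_{1k}^{r}=-G_{k1}^{r}$ is precisely why identities such as $\mathbf{3}$, $\mathbf{7}$, $\mathbf{13}$, $\mathbf{15}$ carry inhomogeneous right-hand sides (the $_{/1}$-derivative terms and the cross $P$-$R$ products), whereas the fully spatial cyclic identities $\mathbf{2}^{*}$, $\mathbf{4}^{*}$, $\mathbf{12}^{*}$, $\mathbf{14}^{*}$, $\mathbf{19}^{*}$ close up to zero. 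Concretely, I would organize the verification by the six torsion-slot types and the cyclic/alternating structure, deriving the $h_{M}$-valued identities ($\mathbf{1}$--$\mathbf{12}^{*}$) and then the $v$-valued identities ($\mathbf{13}$--$\mathbf{19}^{*}$) in parallel, using at each stage the coincidence of horizontal and vertical curvature components to halve the work; this is entirely analogous to the derivation in \cite{Neagu-Stoica} for the general $\Gamma$-linear connection, simply pruned by the Cartan-type symmetries.
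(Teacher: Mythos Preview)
Your approach is essentially the paper's own: start from the two general Bianchi identities for a linear connection (which the paper writes in index form as $\sum_{\{A,B,C\}}\{\mathbf{R}_{ABC}^{F}-\mathbf{T}_{AB:C}^{F}-\mathbf{T}_{AB}^{G}\mathbf{T}_{CG}^{F}\}=0$ and $\sum_{\{A,B,C\}}\{\mathbf{R}_{DAB:C}^{F}+\mathbf{T}_{AB}^{G}\mathbf{R}_{DCG}^{F}\}=0$), specialize the slots to the adapted basis $\{\delta/\delta t,\delta/\delta x^{i},\partial/\partial y_{1}^{i}\}$, and prune using the Cartan-type vanishings $T_{ij}^{k}=S_{(1)(i)(j)}^{(k)(1)(1)}=0$ together with the $h$-normal curvature coincidences from Table~\ref{Table-Curv-h-Normal}. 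Two small corrections to your write-up: the split $\mathbf{1}$--$\mathbf{12}$ versus $\mathbf{13}$--$\mathbf{19}$ is \emph{first} Bianchi versus \emph{second} Bianchi (not $h_{M}$-valued versus $v$-valued), and the Ricci identities are not actually needed as an intermediate mechanism---the component form of $(\nabla_{X}\mathbf{T})$ and $(\nabla_{X}\mathbf{R})$ follows directly from the definition of the covariant derivatives $_{/1}$, $_{|k}$, $|_{(k)}^{(1)}$ on d-tensors.
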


\begin{proof}
Let ${(X_{A})=\left( \delta /\delta t,\delta /\delta x^{i},\partial
/\partial y_{1}^{i}\right) }$ be the adapted basis of vector fields produced
by the nonlinear connection (\ref{gen_nlc}). Let $\nabla $ be the $h$-normal 
$\Gamma $-linear connection of Cartan type given by (\ref%
{h-normal-gamma-linear-connect}). For the linear connection $\nabla $ the
following general local Bianchi identities are true \cite{Mir-An}, \cite%
{Neagu-Bianchi}:%
\[
\sum_{\{A,B,C\}}\left\{ \mathbf{R}_{ABC}^{F}-\mathbf{T}_{AB:C}^{F}-\mathbf{T}%
_{AB}^{G}\mathbf{T}_{CG}^{F}\right\} =0,
\]%
\[
\sum_{\{A,B,C\}}\left\{ \mathbf{R}_{DAB:C}^{F}+\mathbf{T}_{AB}^{G}\mathbf{R}%
_{DCG}^{F}\right\} =0,
\]%
where $\mathbf{R}(X_{A},X_{B})X_{C}=\mathbf{R}_{CBA}^{D}X_{D}$, $\mathbf{T}%
(X_{A},X_{B})=\mathbf{T}_{BA}^{D}X_{D}$ and \textquotedblright $_{:A}$%
\textquotedblright\ represents one from the local covariant derivatives
\textquotedblright $_{/1}$\textquotedblright , \textquotedblright $_{|i}$%
\textquotedblright\ or \textquotedblright $|_{(i)}^{(1)}$\textquotedblright
. Obviously, the components $\mathbf{T}_{AB}^{C}$ and $\mathbf{R}_{ABC}^{D}$
are the adapted components of the torsion and curvature tensors associated
to the linear connection $\nabla \Gamma $. These components are expressed in
the Tables \ref{Table-Tors-h-Normal} and \ref{Table-Curv-h-Normal}. Then,
replacing $A,B,C,\ldots $ with indices of type 
\[
\left\{ 1,i,{\QATOP{(1)}{(i)}}\right\} ,
\]%
by laborious local computations, we obtain the required Bianchi identities.
\end{proof}

\begin{remark}
The above \textbf{eleven "star"-Bianchi identities }are exactly those eleven
Bianchi identities that characterize the canonical metrical Cartan %
co\-nnec\-tion of a Finsler space (please see \cite{Mir-Shimada}, pp.48).
\end{remark}

\begin{remark}
The importance of preceding Bianchi identities for an $h$-nor\-mal $\Gamma $%
-linear connection of Cartan type $\nabla $ on the 1-jet space $J^{1}(%
\mathbb{R},M)$ comes from their using in the local description of the 
\textbf{generalized Maxwell equations} (please see \cite{Neagu-Rheon}, pp.
161) that characterize an electromagnetic field in the background of
relativistic non-autonomous Lagrange geometry.
\end{remark}

\textbf{Author's address:}\medskip

Mircea N{\scriptsize EAGU}

University Transilvania of Bra\c{s}ov, Faculty of Mathematics and Informatics

Department of Algebra, Geometry and Differential Equations

B-dul Iuliu Maniu, Nr. 50, BV 500091, Bra\c{s}ov, Romania.

\textit{E-mail}: mircea.neagu@unitbv.ro

\textit{Website}: http://www.2collab.com/user:mirceaneagu

\end{document}